\numberwithin{equation}{section}
\newcommand{\RR}{\mathbb{R}}
\newcommand{\NN}{\mathbb{N}}
\newcommand{\eps}{\varepsilon}
\newcommand{\LL}{\mathscr{L}}
\def\A*{{\textsl{A\!*-}algebra}}
\def\B*{{\textsl{B*-}algebra}}
\def\C*{{\sl C*}-algebra}
\def\Cs*{{\sl C*}-subalgebra}
\def\CSeg*{{\sl C*}-Segal algebra}
\newtheorem{theorem}{Theorem}[section]
\newtheorem{lemma}[theorem]{Lemma}
\newtheorem{corollary}[theorem]{Corollary}
\newtheorem{proposition}[theorem]{Proposition}
\theoremstyle{definition}
\newtheorem{definition}[theorem]{Definition}
\newtheorem{example}[theorem]{Example}
\newtheorem{remark}[theorem]{Remark}
\newenvironment{acknowledgements}{\noindent\textbf{Acknowledgements.}}
\begin{document}

\title{\CSeg*s with order unit}

\author{Jukka Kauppi}
\address{Department of Mathematical Sciences, P.O.\ Box~3000, SF~90014,\\
University of Oulu, Finland;\\
{\tt e-mail: jukka.kauppi@oulu.fi}}

\author{Martin Mathieu\corref{cor1}}
\address{Pure Mathematics Research Centre, School of Mathematics and Physics,\\
Queen's University Belfast, Belfast BT7 1NN, Northern Ireland;\\
{\tt e-mail: m.m@qub.ac.uk}}

\cortext[cor1]{Corresponding author}

\begin{keyword}
Segal algebra, multiplier module, \CSeg*, order unitization, $\sigma$-unital \C*
\end{keyword}

\begin{abstract}
We introduce the notion of a (noncommutative) \CSeg* as a Banach algebra $(A,\|\cdot\|_A)$
which is a dense ideal in a \C* $(C,\|\cdot\|_C)$, where $\|\cdot\|_A$ is strictly stronger than $\|\cdot\|_C$ on~$A$.
Several basic properties are investigated and, with the aid of the theory of multiplier modules, the
structure of \CSeg*s with order unit is determined.
\end{abstract}

\maketitle

\section{Introduction}\label{sect:intro}

\noindent
The concept of a Segal algebra originated in the work of Reiter, cf.~\cite{RS}, on subalgebras of the $L^1$-algebra of a locally compact group.
It was generalized to arbitrary Banach algebras by Burnham in~\cite{jB}. A \textit{\CSeg*\/} is a Banach algebra $A$ which is continuously
embedded as a dense, not necessarily self-adjoint ideal in a \C*.
Despite many important examples in analysis, such as the Schatten classes for example,
the general structure and properties of \CSeg*s is not well understood. The multiplier algebra and the bidual  of self-adjoint
\CSeg*s  were described in \cite{fA,KR} and, in the presence of an approximate identity, the form of the closed ideals of \CSeg*s was given in~\cite{bB}.
Commutative \CSeg*s were studied by Arhippainen and the first-named author in~\cite{AK4}.

In this paper, our aim is to develop the basics of a theory of general \CSeg*s, with an emphasis on their order structure.
In particular, the notion of an order unit turns out to be crucial. However, in contrast to the \C* case, an order unit of a \CSeg*
cannot serve as a multiplicative identity for the algebra. In fact, it emerges that a \CSeg* with an order unit cannot even have an
approximate identity (bounded or unbounded). This necessitates developing new approaches, since most results on Segal algebras have
been obtained under the assumption of an approximate identity. To this end, we will introduce a notion of ``approximate ideal" which,
together with the theory of multiplier modules, is used to determine the structure of \CSeg*s which either contain an order unit or to which
an order unit can be added in a natural way. Among the basic examples of \CSeg*s with order unit are faithful principal ideals of \C*s.

Section~\ref{sect:irreg-banach} of this paper is of a preliminary nature. We introduce the fundamental concepts and develop basic properties,
some of which have been discussed elsewhere in a narrower context. Our purpose here is to prepare the ground for Section~\ref{sect:c*segal}
where we devote ourselves to noncommutative \CSeg*s. The main new tools employed are the notion of the approximate ideal (see
Definition~\ref{114}) together with the concept of multiplier module (Definition~\ref{121}).
Theorem~\ref{211} contains a characterization of self-adjoint \CSeg*s with an order unit whose norm coincides with the order unit norm.
So-called weighted \C*s, which provide the noncommutative analogues of Nachbin algebras, are described in Theorem~\ref{214};
they always possess an order unitization (Proposition~\ref{217}).

\section{Irregularity of Banach algebras}\label{sect:irreg-banach}

\noindent
In this section, we discuss and analyze Banach algebras which are (possibly non-closed) ideals in a Banach algebra.

\subsection{Notation and basic definitions}

\noindent
Throughout this paper, let $A$ be a Banach algebra with norm $\|\cdot\|$.
A bimodule $D$ over $A$, in particular, an ideal of $A$,  is called \textit{faithful\/} if
for each $a\in A\setminus\{0\}$ there are $m,n\in {D}$ such that $a\cdot m\ne0$ and $n\cdot a\ne0$.
The Banach algebra $A$ is called \textit{faithful\/} if it is a faithful bimodule over itself.

The basic notion of this paper is that of the \textit{multiplier seminorm\/}, defined on~$A$ by
\[
\|a\|_M:=\sup_{\|b\|\le 1}\{\|ab\|,\|ba\|\} \quad (a,b\in A).
\]
It is not difficult to verify that $\|\cdot\|_M$ is an algebra seminorm on $A$ which is a norm if $A$ is faithful.
If $A$ has an identity element (denoted by $e$), then each $a\in A$ satisfies
\[
\|a\|_M\le\|a\|\le\|e\|\|a\|_M.
\]
However, in the non-unital case, the interrelations between $\|\cdot\|$ and $\|\cdot\|_M$ become more involved.
The left-hand inequality remains true for every $a\in A$, but even if $\|\cdot\|_M$ is a norm, it need not be equivalent to~$\|\cdot\|$.

\begin{example}\label{11}
Let $G$ be an infinite compact group, and let $\lambda$ be a Haar measure on it which is normalized such that $\lambda(G)=1$.
For $1\le p<\infty$, denote by $L_p(G)$ the Banach space of (equivalence classes of) complex-valued functions $f$ on $G$ such that
\[
\|f\|_p:=\Bigl(\int_G |f(t)|^p\, d\lambda (t)\Bigr)^{\frac{1}{p}}<\infty.
\]
With convolution as multiplication, $L_p(G)$ is a Banach algebra.
Since every $f,g\in L_p(G)$ satisfy $\|f*g\|_p\le\|f\|_1\|g\|_p$, it follows that, for $1<p<\infty$,
the multiplier norm on $L_p(G)$ is not equivalent to $\|\cdot\|_p$.
On the other hand, it is well known that the two norms coincide on~$L_1(G)$.
\end{example}

In order to simplify the subsequent discussion, we introduce some terminology, following \cite{BD,AK3}.

\begin{definition}\label{12}
The Banach algebra $A$ is called
\begin{enumerate}[(i)]\itemsep0pt
\item \textit{norm regular\/} if $\|\cdot\|$ and $\|\cdot\|_M$ coincide on $A$;
\item \textit{weakly norm regular\/} if $\|\cdot\|$ and $\|\cdot\|_M$ are equivalent on $A$;
\item \textit{norm irregular\/} if $\|\cdot\|$ is strictly stronger than $\|\cdot\|_M$ on $A$.
\end{enumerate}
\end{definition}

Besides the multiplier seminorm, the following family of algebra norms will play a fundamental role in our work.
The terminology will be justified shortly.

\begin{definition}\label{14}
Let $|\cdot |$ be an algebra norm on $A$. We call it a \textit{Segal norm\/} if there exist strictly positive constants~$k$ and~$l$ such that
\[
k\,\|a\|_M\le |a|\le l\,\|a\|
\]
for all $a\in A$.
\end{definition}

\subsection{Segal algebras}

\noindent
Given a Banach algebra $B$ with norm $\|\cdot\|_B$, recall that $A$ is said to be a \textit{Segal algebra in\/}~$B$
if it is a dense ideal of $B$ and there exists a constant $l>0$ such that $\|a\|_B\le l\,\|a\|$ for every $a\in A$.
(The second condition is automatically fulfilled if $B$ is semisimple, by \cite[Proposition~2.2]{bB}.)
For future reference, we record the following standard result of Barnes \cite[Theorem~2.3]{bB}.
For other basic properties of Segal algebras, see \cite{mL,RS}.

\begin{lemma}\label{15}
Let $B$ be a Banach algebra in which $A$ is a Segal algebra. Then $A$ is a Banach $B$-bimodule, i.e.,
there exists a positive constant~$l$ such that
\[
\|ax\|\le l\,\|a\|\|x\|_B \quad\mbox{ and } \quad\|xa\|\le l\,\|a\|\|x\|_B
\]
for all $a\in A$ and $x\in B$.
\end{lemma}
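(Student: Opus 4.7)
The plan is to prove the two-sided module bound using the closed graph theorem combined with the uniform boundedness principle. I will handle only the right-action estimate $\|ax\|\le l\|a\|\|x\|_B$; the left-action estimate is proved symmetrically.

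First I would fix $a\in A$ and consider the linear map $L_a\colon B\to A$ defined by $L_a(x)=ax$, which is well-defined because $A$ is an ideal of $B$. To show it is bounded, I would verify its graph is closed: if $x_n\to x$ in $(B,\|\cdot\|_B)$ and $ax_n\to y$ in $(A,\|\cdot\|)$, then continuity of multiplication in $B$ gives $ax_n\to ax$ in $(B,\|\cdot\|_B)$, while the Segal inequality $\|\cdot\|_B\le l_0\|\cdot\|$ on $A$ forces $ax_n\to y$ in $B$ as well. Uniqueness of limits in $B$ yields $y=ax$, so $L_a$ is bounded; call its operator norm $M(a)$.

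Next I would fix $x\in B$ and consider the linear map $R_x\colon A\to A$ given by $R_x(a)=ax$, again well-defined by the ideal property. An identical closed-graph argument, where now $a_n\to a$ in $A$ forces $a_nx\to ax$ in $B$ while the hypothesis $a_nx\to z$ in $A$ forces the same convergence in $B$, shows that $R_x$ is bounded for every $x\in B$.

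With both pieces in hand, I would apply the Banach--Steinhaus theorem to the family $\{R_x : \|x\|_B\le 1\}$ of bounded operators on the Banach space $(A,\|\cdot\|)$. Pointwise boundedness is exactly the finiteness of $\sup_{\|x\|_B\le 1}\|R_x(a)\|=\sup_{\|x\|_B\le 1}\|ax\|=\|L_a\|=M(a)$ obtained in the first step. Uniform boundedness then gives a constant $l>0$ with $\|R_x\|\le l$ whenever $\|x\|_B\le 1$, and homogeneity yields $\|ax\|\le l\|a\|\|x\|_B$ for all $a\in A$ and $x\in B$. The analogous argument with left multiplication produces a constant for the other inequality, and taking the larger of the two gives the uniform $l$ stated in the lemma.

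The only conceptual subtlety, and the step I would check most carefully, is the closed-graph argument itself: it relies essentially on the Segal-algebra hypothesis that the embedding $(A,\|\cdot\|)\hookrightarrow(B,\|\cdot\|_B)$ is continuous, since this is what lets convergence in $A$ be transported to $B$ so that the two candidate limits can be compared using the already-known joint continuity of multiplication in $B$. Beyond this, the proof is routine functional analysis with no further obstacles.
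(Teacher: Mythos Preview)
Your argument is correct: the closed graph theorem shows each $L_a$ and each $R_x$ is bounded, and then Banach--Steinhaus upgrades pointwise boundedness of $\{R_x:\|x\|_B\le1\}$ to a uniform operator bound, giving the required inequality. The paper itself does not supply a proof but merely cites this as the standard result of Barnes \cite[Theorem~2.3]{bB}; your closed-graph-plus-uniform-boundedness approach is exactly the classical route to that theorem, so there is nothing to contrast.
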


\goodbreak
In our context, it is natural to reverse the notion of a Segal algebra as follows.

\begin{definition}\label{16}
By a \textit{Segal extension\/} of $A$ we mean a pair $(B,\iota)$, where
\begin{enumerate}[(i)]\itemsep0pt
\item $B$ is a Banach algebra;
\item $\iota$ is a continuous injective homomorphism from $A$ into $B$;
\item $\iota(A)$ is a dense ideal of $B$.
\end{enumerate}
Given a Segal extension $(B,\iota)$ of $A$, it is evident that $\iota(A)$ becomes a Segal algebra in~$B$
when equipped with the norm $\|\iota(a)\|_\iota :=\|a\|$ for $a\in A$. Whenever convenient, we shall regard a
Segal extension of $A$ as a Banach algebra in which $A$ is a Segal algebra.
\end{definition}

The proposition below establishes a useful relation between Segal extensions of~$A$ and Segal norms on~$A$.
(Here and in the sequel, we identify a normed algebra with its canonical image in its completion.)

\begin{proposition}\label{17}
The following conditions are equivalent for a Banach algebra~$B$:
\begin{enumerate}\itemsep0pt
\item[\rm{(a)}] $B$ is a Segal extension of $A$;
\item[\rm{(b)}] $B$ is the completion of $A$ with respect to a Segal norm on~$A$.
\end{enumerate}
\end{proposition}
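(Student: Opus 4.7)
The plan is to prove the two implications separately, with Lemma~\ref{15} powering (a)$\Rightarrow$(b) and the defining Segal-norm inequality powering (b)$\Rightarrow$(a).

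For (a)$\Rightarrow$(b), given a Segal extension $(B,\iota)$, the natural candidate for a Segal norm on~$A$ is $|a|:=\|\iota(a)\|_B$. Submultiplicativity is immediate, and continuity of $\iota$ delivers the upper bound $|a|\le l\,\|a\|$ at once. For the lower bound I would regard $\iota(A)$ as a Segal algebra in~$B$ equipped with the transported norm $\|\iota(a)\|_\iota=\|a\|$ and invoke Lemma~\ref{15}: applied with $x=\iota(b)$ for $b\in A$, it yields $\|ab\|\le l'\|a\||b|$ and $\|ba\|\le l'\|a\||b|$, whence $\|b\|_M\le l'\,|b|$ upon taking the supremum over $\|a\|\le 1$. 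Denseness of $\iota(A)$ in~$B$, together with the fact that $\iota:(A,|\cdot|)\to(\iota(A),\|\cdot\|_B)$ is an isometric algebra isomorphism, identifies $B$ with the completion of $(A,|\cdot|)$.

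For (b)$\Rightarrow$(a), let $B$ be the completion of $A$ with respect to a Segal norm~$|\cdot|$, and let $\iota$ be the canonical embedding. Submultiplicativity of $|\cdot|$ allows multiplication to extend continuously from $A$ to~$B$, making $B$ a Banach algebra; the bound $|a|\le l\,\|a\|$ gives continuity of $\iota$, $|\cdot|$ being a norm forces $\iota$ to be injective, and denseness of $\iota(A)$ is automatic. The remaining, and main, obstacle is to verify that $\iota(A)$ is an \emph{ideal} of~$B$. Here I would fix $a\in A$ and $b\in B$, pick a sequence $(a_n)\subset A$ with $\iota(a_n)\to b$ in~$B$, and use the lower Segal inequality $k\|\cdot\|_M\le|\cdot|$ to conclude that $(a_n)$ is Cauchy in $\|\cdot\|_M$. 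Then
\[
\|a(a_n-a_m)\|\le\|a\|\,\|a_n-a_m\|_M\longrightarrow 0,
\]
and completeness of $(A,\|\cdot\|)$ produces $c\in A$ with $aa_n\to c$ in $\|\cdot\|$; transporting this convergence back to~$B$ via $|\cdot|\le l\,\|\cdot\|$ forces $\iota(a)b=\iota(c)\in\iota(A)$, and the right-multiplication case is symmetric.

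The crux is exactly this last step: both inequalities in Definition~\ref{14} must be used simultaneously, converting convergence in the weaker norm~$|\cdot|$ into convergence of products in the stronger norm $\|\cdot\|$ and then exploiting completeness of $(A,\|\cdot\|)$ to land inside~$A$ rather than merely inside~$B$. This is the mechanism that turns $A$ into a genuine ideal of its Segal extension rather than a dense subalgebra, and explains why norm-irregularity is the natural setting in which Definition~\ref{14} has content.
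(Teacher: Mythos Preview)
Your proposal is correct and follows essentially the same approach as the paper: for (a)$\Rightarrow$(b) you transport the $B$-norm back to $A$ and use Lemma~\ref{15} to get the lower multiplier-norm bound, and for (b)$\Rightarrow$(a) you use both Segal-norm inequalities to show that $(aa_n)$ is $\|\cdot\|$-Cauchy in $A$ and then identify its limit with $ax$ in~$B$. The only cosmetic differences are that you spell out a few verifications (continuous extension of multiplication to~$B$, injectivity of~$\iota$) that the paper treats as implicit in the phrase ``for a Banach algebra~$B$'' and in the convention of identifying $A$ with its image in its completion.
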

\begin{proof}
(a) $\Rightarrow$ (b) Lemma~\ref{15} gives a constant $l>0$ such that $\|a\|_M\leq l\|a\|_B$ for all $a\in A$.
Together with the definition of a Segal extension, this means that the restriction of $\|\cdot\|_B$ to $A$ is the desired Segal norm on~$A$.

(b) $\Rightarrow$ (a) It is enough to prove that $A$ is an ideal of~$B$. Let $a\in A$ and $x\in B$.
Then there is a sequence $(a_n)$ in $A$ such that $\|a_n-x\|_B\rightarrow 0$.
By Definition~\ref{14}, there are positive constants $k$ and $l$ such that $k\,\|a\|_M\le \|a\|_B\le l\,\|a\|$ for all $a\in A$.
Since $\|ab\|\leq\|a\|\|b\|_M$ for every $b\in A$ and
\[
\|aa_n-aa_m\|\leq\|a\|\,\|a_n-a_m\|_M\leq k^{-1}\,\|a\|\,\|a_n-a_m\|_B\qquad(n,m\in\NN),
\]
it follows  that $(aa_n)$ is a Cauchy sequence in~$A$.
Thus, for some $b\in A$, one has $\|b-aa_n\|\rightarrow 0$. From
\[
\|b-ax\|_B\le\|b-aa_n\|_B+\|aa_n-ax\|_B\le l\,\|b-aa_n\|+\|a\|_B\|a_n-x\|_B\rightarrow 0
\]
we deduce $ax=b$ and thus $A$ is a right ideal of~$B$.
That $A$ is a left ideal of $B$ is proved in a similar way.
\end{proof}

In particular, this result shows that norm irregular Banach algebras provide the natural framework for our investigation.

\begin{corollary}\label{18}
A faithful Banach algebra $A$ is norm irregular if and only if it is a Segal algebra in some Banach algebra.
Furthermore, the completion of $A$ under the multiplier norm is a Segal extension of $A$ with the property
that any Segal extension of $A$ can be embedded as a dense subalgebra.
\end{corollary}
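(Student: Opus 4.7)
The plan is to use Proposition~\ref{17} as the engine and to exhibit the completion $\hat A_M$ of $A$ in the multiplier norm as the universal Segal extension.

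Since $A$ is faithful, $\|\cdot\|_M$ is an algebra norm (not merely a seminorm), and it is itself a Segal norm in the sense of Definition~\ref{14} with $k=l=1$: the left inequality $\|a\|_M\le\|a\|_M$ is trivial, and the right inequality $\|a\|_M\le\|a\|$ is the general fact recorded in the text just after the definition of the multiplier seminorm. Proposition~\ref{17} then identifies $\hat A_M$ as a Segal extension of~$A$, which is a strict one precisely when $A$ is norm irregular; this gives the forward direction of the biconditional. For the reverse direction, any (strict) Segal extension $(B,\iota)$ supplies a Segal norm on $A$ via the restriction of $\|\cdot\|_B$, so $k\|a\|_M\le\|a\|_B\le l\|a\|$ on~$A$; if $A$ were norm regular, all three norms would be equivalent on~$A$, making $A$ complete in $\|\cdot\|_B$ and hence equal to $B$, contradicting strictness.

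For the universal property, I would construct $\phi:B\to\hat A_M$ by continuously extending the identity on~$A$. The key preliminary estimate $\|a\|_M\le l\|a\|_B$ on~$A$ comes from Lemma~\ref{15}: interchanging the roles of the ``$A$-element'' $b\in A$ and the ``$B$-element'' $a\in A$ viewed through~$\iota$ yields $\|ab\|,\|ba\|\le l\|b\|\|a\|_B$, and taking the supremum over $\|b\|\le 1$ produces the desired bound. Hence the identity $(A,\|\cdot\|_B)\to(A,\|\cdot\|_M)$ extends continuously to a homomorphism $\phi:B\to\hat A_M$ whose image contains the dense subset $A$ of $\hat A_M$ and is therefore dense.

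The main obstacle is injectivity of $\phi$. If $\phi(x)=0$ with $\iota(a_n)\to x$ in $\|\cdot\|_B$, then $\|a_n\|_M\to 0$; for any $b\in A$, the estimate $\|a_n b\|\le\|b\|\|a_n\|_M$ forces $a_nb\to 0$ in~$A$, hence $\iota(a_n)\iota(b)\to 0$ in~$B$, so $x\iota(b)=0$, and symmetrically $\iota(b)x=0$. By density of $\iota(A)$ in $B$ together with continuity of multiplication, $x$ must lie in the two-sided annihilator of~$B$. Concluding that $x=0$ from here is the delicate step; I expect it to rest on propagating the faithfulness of~$A$ to~$B$, using the density of the ideal $\iota(A)$ in $B$ and the relations $Bx=xB=0$ to approximate $x$ by $\iota(a_n)$ whose products with elements of $A$ all vanish, and then invoking faithfulness to force the approximants---and therefore $x$---to zero.
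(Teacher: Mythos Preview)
Your derivation from Proposition~\ref{17} is exactly what the paper intends (it offers no separate proof). One small slip in the biconditional: ``not norm irregular'' means \emph{weakly} norm regular, not norm regular; replace accordingly and your contrapositive argument goes through unchanged.

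The substantive gap is the injectivity of $\phi\colon B\to\widetilde{A}_M$. Your proposed fix does not work: from $\|a_n\|_M\to0$ you obtain only $a_nb\to0$ in~$A$ for each fixed~$b$, never $a_nb=0$, so faithfulness of~$A$ tells you nothing about the individual approximants. What your argument \emph{does} establish (and the reverse inclusion follows from Lemma~\ref{15} applied to $a_nb=(a_n-x)b$) is that $\ker\phi$ coincides with the two-sided annihilator of~$B$; hence $\phi$ is injective precisely when $B$ is faithful. This is \emph{not} automatic from faithfulness of~$A$. For a counterexample, take any faithful norm-irregular $A$ with $\overline{A^2}\ne A$ but $A_M$ possessing a bounded approximate identity (so that $A^2$ is $\|\cdot\|_M$-dense in~$A$); the order unit \CSeg*s appearing later in the paper furnish such examples. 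Choose a nonzero $\|\cdot\|$-bounded functional $\varphi$ on~$A$ vanishing on~$\overline{A^2}$ and set $|a|:=\|a\|_M+|\varphi(a)|$. This is a Segal norm (submultiplicativity uses $\varphi(A^2)=0$), and $\varphi$ is necessarily $\|\cdot\|_M$-discontinuous; picking $a_n\in A$ with $\|a_n\|_M\to0$ and $\varphi(a_n)=1$ gives a $|\cdot|$-Cauchy sequence whose limit $x$ in the completion~$B$ is nonzero yet has $\phi(x)=0$. The paper's word ``embedded'' therefore overreaches; what survives in general is a continuous homomorphism $B\to\widetilde{A}_M$ with dense image, which is an embedding exactly when the Segal extension~$B$ is itself faithful.
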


\begin{remark}\label{19}
The assumption that $A$ is faithful is not needed in proving the ``if''-part.
\end{remark}

\textit{For the remainder of this paper, we shall assume that $A$ is faithful.}

\smallskip\noindent
The normed algebra $(A,\|\cdot\|_M)$ will be denoted by $A_M$ and its completion by~$\widetilde{A}_M$.
We shall regard every Segal extension of $A$ as a subalgebra of~$\widetilde{A}_M$.

\subsection{Approximate identities of norm irregular Banach algebras}

\noindent
Given a normed algebra $B$ with norm $\|\cdot\|_B$, recall that an \textit{approximate identity\/} for~$B$ is a net
$(e_{\alpha})_{\alpha\in\Omega}$ in $B$ such that $\|xe_{\alpha}-x\|_B\rightarrow 0$ and $\|e_{\alpha}x-x\|_B\rightarrow 0$
for every $x\in B$. It is said to be \textit{bounded\/} if there exists a constant $l>0$ such that $\|e_{\alpha}\|_B\le l$
for all $\alpha\in\Omega$. Moreover, it is said to be \textit{contractive\/} if $\|e_{\alpha}\|_B\le 1$ for all $\alpha\in\Omega$.
In case $\Omega=\NN$, it is said to be \textit{sequential}.

One of the drawbacks of norm irregular Banach algebras is that they cannot possess a bounded approximate identity.
Indeed, it is easy to see that if $A$ has a bounded approximate identity $(e_{\alpha})_{\alpha\in\Omega}$, then each $a\in A$
satisfies $\|a\|\le l\,\|a\|_M$, where $l=\sup_{\alpha\in\Omega}\|e_{\alpha}\|$. In the context of norm irregular
Banach algebras, thus the crucial point turns out to be the existence of a bounded approximate identity with
respect to the multiplier norm. In order to make this precise, we first need a simple lemma.

\begin{lemma}\label{110}
The following conditions are equivalent:
\begin{enumerate}\itemsep0pt
\item[\rm{(a)}] $A_M$ has a bounded approximate identity;
\item[\rm{(b)}] $\widetilde{A}_M$ has a bounded approximate identity;
\item[\rm{(c)}] $A$ has a Segal extension with a bounded approximate identity.
\end{enumerate}
\end{lemma}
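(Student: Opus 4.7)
The plan is to close the cycle (a) $\Rightarrow$ (b) $\Rightarrow$ (c) $\Rightarrow$ (a), exploiting at each step either density of $A$ in the relevant completion or the two-sided norm comparison $k\|\cdot\|_M\le\|\cdot\|_B\le l\|\cdot\|$ available on any Segal extension.

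For (a) $\Rightarrow$ (b), given a bounded approximate identity $(e_\alpha)_{\alpha\in\Omega}$ for $A_M$ with bound $L$, I would show that the same net, viewed inside $\widetilde{A}_M$, is still a bounded approximate identity. Boundedness is preserved trivially. For convergence, a standard $3\varepsilon$-argument using density of $A$ in $\widetilde{A}_M$ does the job: given $x\in\widetilde{A}_M$ and $\varepsilon>0$, choose $a\in A$ with $\|a-x\|_M<\varepsilon$ and estimate
\[
\|xe_\alpha-x\|_M\le\|x-a\|_M\,\|e_\alpha\|_M+\|ae_\alpha-a\|_M+\|a-x\|_M\le(L+1)\varepsilon+\|ae_\alpha-a\|_M,
\]
together with the analogous left-sided bound; letting $\alpha$ run eventually kills the middle term.

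For (b) $\Rightarrow$ (c), I would simply invoke Corollary~\ref{18}: $\widetilde{A}_M$ is itself a Segal extension of $A$, so a bounded approximate identity in $\widetilde{A}_M$ is already a bounded approximate identity in a Segal extension of $A$.

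The main work lies in (c) $\Rightarrow$ (a). Let $B$ be a Segal extension of $A$ carrying a bounded approximate identity $(e_\alpha)_{\alpha\in\Omega}$ with $\|e_\alpha\|_B\le L$. By Lemma~\ref{15} applied with the two variables interchanged, there is $l>0$ with $\|a\|_M\le l\|a\|_B$ for every $a\in A$, alongside the Segal estimate $\|a\|_B\le l\|a\|$. Since $A$ is dense in $B$, for each pair $(\alpha,n)\in\Omega\times\NN$ I can pick $f_{\alpha,n}\in A$ with $\|f_{\alpha,n}-e_\alpha\|_B<1/n$, so that $\|f_{\alpha,n}\|_M\le l\|f_{\alpha,n}\|_B\le l(L+1)$. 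Ordering $\Omega\times\NN$ by the product order, for $a\in A$ one has
\[
\|af_{\alpha,n}-a\|_M\le l\|af_{\alpha,n}-a\|_B\le l\|a\|_B\,\frac{1}{n}+l\|ae_\alpha-a\|_B,
\]
and each summand can be made arbitrarily small by pushing $(\alpha,n)$ far enough in the product order; the analogous bound on $\|f_{\alpha,n}a-a\|_M$ completes the verification that $(f_{\alpha,n})$ is a bounded approximate identity for $A_M$.

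The only real obstacle is the bookkeeping in (c) $\Rightarrow$ (a): one must produce a net in $A$ from a net in $B$ without losing either the bound or the approximate-identity property, and the product-order trick above is the cleanest way to do this. The other two implications reduce to density arguments that are essentially forced by the norm inequalities built into the definition of a Segal extension.
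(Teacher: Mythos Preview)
Your proof is correct and follows essentially the same route as the paper, which dispatches the lemma in one line by combining Proposition~\ref{17} with the standard fact (cited from Dixon) that a normed algebra has a bounded approximate identity if and only if its completion does. Your (a)$\Rightarrow$(b) and the product-order construction in (c)$\Rightarrow$(a) simply spell out that cited fact explicitly, and your (b)$\Rightarrow$(c) via Corollary~\ref{18} is exactly what the paper has in mind.
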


\noindent
The proof is immediate from Proposition~\ref{17} and the fact that a normed algebra has a bounded approximate identity
if and only if its completion has a bounded approximate identity (see, e.g., \cite[Lemma~2.1]{pD1}).

Now, consider the set
\[
A\widetilde{A}_M:=\{ax : \ a\in A \mbox{ and } x\in \widetilde{A}_M\}.
\]
As $A$ is a Banach $\widetilde{A}_M$-bimodule, we conclude from the Cohen--Hewitt Factorization Theorem \cite[Theorem~B.7.1]{DB}
and part (b) of the previous lemma that $A\widetilde{A}_M$ is a closed faithful ideal of $A$ whenever $A_M$ has a
bounded approximate identity. Its importance lies in the fact that it is the largest closed ideal of $A$ with an approximate identity
(necessarily unbounded in the norm irregular case).

\begin{proposition}\label{113}
Let $A$ be a Banach algebra such that $A_M$ has a bounded approximate identity $(e_{\alpha})_{\alpha\in\Omega}$. Then:
\begin{enumerate}\itemsep0pt
\item[\rm{(i)}] $A\widetilde{A}_M=\widetilde{A}_MA$;
\item[\rm{(ii)}] $A\widetilde{A}_M=\{a\in A : \ \|ae_{\alpha}-a\|\rightarrow 0 \mbox{ and } \|e_{\alpha}a-a\|\rightarrow 0\}$;
\item[\rm{(iii)}] $A\widetilde{A}_M$ has an approximate identity;
\item[\rm{(iv)}] every closed ideal of $A$ with an approximate identity is contained in~$A\widetilde{A}_M$.
\end{enumerate}
\end{proposition}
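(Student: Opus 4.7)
The approach is to apply the Cohen--Hewitt Factorization Theorem (mentioned in the discussion preceding the proposition) to $A$ viewed as a Banach bimodule over $\widetilde{A}_M$ with the bounded approximate identity $(e_\alpha)$ guaranteed by Lemma~\ref{110}; set $K := \sup_\alpha\|e_\alpha\|_M$ and let $l$ denote the bimodule constant from Lemma~\ref{15}. The Cohen--Hewitt identifications of the one-sided essential submodules read
\[
A\widetilde{A}_M = \{a \in A : \|ae_\alpha - a\| \to 0\}, \qquad \widetilde{A}_M A = \{a \in A : \|e_\alpha a - a\| \to 0\},
\]
both being closed in $(A,\|\cdot\|)$. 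With this in hand, (ii) follows at once from (i), since $A\widetilde{A}_M$ then coincides with both one-sided essential sets and hence with their intersection. Part (iv) is also immediate: for a closed ideal $I$ of $A$ with approximate identity $(u_\beta)\subseteq I$, each $a\in I$ is the $\|\cdot\|$-limit of $au_\beta\in A\widetilde{A}_M$ (using $u_\beta\in A\subseteq\widetilde{A}_M$), and closedness of $A\widetilde{A}_M$ in $A$ forces $a\in A\widetilde{A}_M$.

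For (i), by symmetry it suffices to prove $A\widetilde{A}_M\subseteq\widetilde{A}_M A$. The obstacle --- and the main difficulty of the proposition --- is that one cannot rearrange $e_\alpha(by)$ to move $e_\alpha$ past $b\in A$ and thereby exploit the known convergence $e_\alpha y\to y$ in $\|\cdot\|_M$. My plan is the following detour: given $a\in A\widetilde{A}_M$ and $\varepsilon>0$, first choose $\beta$ so that $\|ae_\beta - a\|$ is small enough that $lK\|ae_\beta - a\|$ and $\|ae_\beta - a\|$ are both below $\varepsilon/3$, and then decompose
\[
e_\alpha a - a = e_\alpha(a - ae_\beta) + \bigl(e_\alpha(ae_\beta) - ae_\beta\bigr) + (ae_\beta - a).
\]
The first summand has norm at most $lK\|a-ae_\beta\|$ by Lemma~\ref{15}; the third is directly controlled by the choice of $\beta$. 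The key observation is that reading $ae_\beta$ as $a\cdot e_\beta$ with $a\in A\subseteq\widetilde{A}_M$ and $e_\beta\in A$ puts $ae_\beta$ in $\widetilde{A}_M A$, so the left essential characterization supplies $\|e_\alpha(ae_\beta) - ae_\beta\|\to 0$ as $\alpha$ increases (with $\beta$ fixed), delivering the final $\varepsilon/3$.

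For (iii), I would take $(e_\alpha^2)_{\alpha\in\Omega}$, which lies in $A\widetilde{A}_M$ as a product of an $A$-element with an element of $A\subseteq\widetilde{A}_M$. The identity $e_\alpha^2 a - a = e_\alpha(e_\alpha a - a) + (e_\alpha a - a)$ together with Lemma~\ref{15}, the bound $K$, and (ii) yields $\|e_\alpha^2 a - a\|\to 0$ for each $a\in A\widetilde{A}_M$; the right-sided case is symmetric. The main difficulty overall is the detour in (i); the remaining parts are straightforward consequences of the Cohen--Hewitt identifications and the closedness of $A\widetilde{A}_M$.
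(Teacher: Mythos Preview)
Your argument is correct, and parts (ii)--(iv) match the paper's proof essentially verbatim (including the use of $(e_\alpha^2)$ for~(iii)). The only genuine difference is in~(i): the paper observes directly that, by Lemma~\ref{15} and the density of $A$ in $\widetilde{A}_M$, the set $A^2$ is $\|\cdot\|$-dense in each of the closed sets $A\widetilde{A}_M$ and $\widetilde{A}_MA$, so both coincide with $\overline{A^2}$. Your $3\varepsilon$ detour is in fact an unrolled version of this: the ``key observation'' that $ae_\beta\in\widetilde{A}_MA$ is nothing other than $ae_\beta\in A^2\subseteq\widetilde{A}_MA$, and your limiting argument then amounts to showing that the closure of $A^2$ (which contains $a$, since $ae_\beta\to a$) lies in $\widetilde{A}_MA$. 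The paper's formulation is shorter and has the side benefit of yielding Lemma~\ref{120}(i) (density of $A^2$ in $E_A$) immediately; your approach is more explicit about the role of the essential-module characterizations but obscures that what makes~(i) work is simply the common dense core~$A^2$, not a delicate interchange of limits.
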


\goodbreak
\begin{proof}
(i) It is immediate from Lemma~\ref{15}
together with the above discussion of the Cohen--Hewitt Factorization Theorem
that the closure of $A^2$ in $A$ coincides with both $A\widetilde{A}_M$ and $\widetilde{A}_MA$,
whence the identity follows.

(ii) In view of~(i), the inclusion ``$\supseteq$'' is evident from the closedness of $A\widetilde{A}_M$ in~$A$,
and the inclusion ``$\subseteq$'' follows easily from Lemmas~\ref{15} and~\ref{110}(b).

(iii) Noting that the net $(e_{\alpha}^{2})_{\alpha\in\Omega}^{}$ is also a bounded approximate identity for~$A_M$,
and is contained in $A\widetilde{A}_M$, the assertion follows from~(ii) by replacing $(e_\alpha)_{\alpha\in\Omega}^{}$ 
with $(e_\alpha^2)_{\alpha\in\Omega}^{}$.

(iv) Given a closed ideal $I$ of $A$ with an approximate identity, the set $I^2$ is dense in it, and the statement follows.
\end{proof}

Motivated by this result, we make the following definition.

\begin{definition}\label{114}
Let $A$ be a Banach algebra such that $A_M$ has a bounded approximate identity.
We put $E_A:=A\widetilde{A}_M$ and call it the \textit{approximate ideal\/} of~$A$.
\end{definition}

\begin{remark}\label{116}
Our approach is particularly well suited for the study of Banach algebras having an unbounded approximate identity.
That is to say, it is not easy to give an~example of a Banach algebra with an approximate identity not bounded in the multiplier norm.
In fact, it appears that Willis was the first who constructed such an algebra in \cite[Example~5]{gW}.
Moreover, an application of the Uniform Boundedness Principle yields that if a Banach algebra has a sequential approximate identity,
then it is automatically bounded with respect to the multiplier norm; see, for instance, \cite[p.~\!191]{pD2}.
\end{remark}

As a consequence of the above discussion, we have the following factorization results.

\begin{corollary}\label{117}
Let $A$ be a Banach algebra with an approximate identity bounded under $\|\cdot\|_M$. Then $E_A=A$.
\end{corollary}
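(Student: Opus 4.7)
The plan is to apply Proposition~\ref{113}(ii) directly with the given net $(e_\alpha)_{\alpha\in\Omega}$. The only thing to be checked is that the hypotheses of Definition~\ref{114}/Proposition~\ref{113} are in force, and once that is done the characterization in part~(ii) immediately yields the desired equality.

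First, I would verify that $(e_\alpha)_{\alpha\in\Omega}$ is a bounded approximate identity for $A_M$. Since $\|\cdot\|_M\le\|\cdot\|$ on $A$, the estimates
\[
\|ae_\alpha-a\|_M\le\|ae_\alpha-a\| \quad\text{and}\quad \|e_\alpha a-a\|_M\le\|e_\alpha a-a\|
\]
show that convergence of $(ae_\alpha)$ and $(e_\alpha a)$ to $a$ in $\|\cdot\|$ forces convergence in $\|\cdot\|_M$ as well. Boundedness in $\|\cdot\|_M$ is given by hypothesis. Hence $A_M$ has a bounded approximate identity, so Lemma~\ref{110} ensures that the approximate ideal $E_A=A\widetilde{A}_M$ is defined.

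With this particular bounded approximate identity for $A_M$ selected, Proposition~\ref{113}(ii) says
\[
E_A=\bigl\{a\in A : \|ae_\alpha-a\|\to 0 \text{ and } \|e_\alpha a-a\|\to 0\bigr\}.
\]
But $(e_\alpha)_{\alpha\in\Omega}$ is, by hypothesis, an approximate identity for $A$ itself in the original norm, so every $a\in A$ satisfies both of these convergences. Therefore $E_A=A$.

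There is no real obstacle in this argument; the only subtle point is recognising that the characterization of $E_A$ in Proposition~\ref{113}(ii) can be tested against \emph{any} $\|\cdot\|_M$-bounded approximate identity for $A_M$, and that the net provided in the hypothesis—an approximate identity for $(A,\|\cdot\|)$ that is bounded under $\|\cdot\|_M$—automatically qualifies as such.
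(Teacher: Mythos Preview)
Your argument is correct and is precisely the route the paper has in mind: the corollary is stated without proof as ``a consequence of the above discussion,'' meaning exactly the application of Proposition~\ref{113}(ii) with the given $\|\cdot\|_M$-bounded approximate identity that you carry out. Your verification that this net is a bounded approximate identity for $A_M$ (via $\|\cdot\|_M\le\|\cdot\|$) is the only detail to fill in, and you handle it cleanly.
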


\begin{corollary}\label{118}
Let $A$ be a Banach algebra with a sequential approximate identity. Then $E_A=A$.
\end{corollary}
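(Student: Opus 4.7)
The plan is to reduce Corollary~\ref{118} directly to Corollary~\ref{117} by showing that any sequential approximate identity of $A$ is automatically bounded with respect to the multiplier norm $\|\cdot\|_M$. This is precisely the fact flagged in Remark~\ref{116}, so the task is really to make the Uniform Boundedness argument explicit.

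Let $(e_n)_{n\in\NN}$ be a sequential approximate identity for $A$. For each $n\in\NN$, I would consider the two bounded linear operators $L_n,R_n\colon A\to A$ given by $L_n(a):=e_n a$ and $R_n(a):=ae_n$. By definition of the multiplier seminorm, $\|e_n\|_M=\max\{\|L_n\|_{\mathrm{op}},\|R_n\|_{\mathrm{op}}\}$, so bounding $(e_n)$ in $\|\cdot\|_M$ amounts to bounding the operator norms of the families $(L_n)$ and $(R_n)$ on the Banach space $(A,\|\cdot\|)$.

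For each fixed $a\in A$, the sequences $(L_n(a))$ and $(R_n(a))$ converge in $(A,\|\cdot\|)$ (to $a$, by the approximate identity property), hence are norm bounded in $n$. The Uniform Boundedness Principle, applied separately to $(L_n)$ and $(R_n)$, then yields a constant $l>0$ with $\|L_n\|_{\mathrm{op}},\|R_n\|_{\mathrm{op}}\le l$ for all $n$, and therefore $\|e_n\|_M\le l$. Thus $(e_n)$ is an approximate identity for $A$ that is bounded in the multiplier norm, so by Corollary~\ref{117} we conclude $E_A=A$.

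I don't expect any genuine obstacle here: the completeness of $(A,\|\cdot\|)$ (needed for the Uniform Boundedness Principle) is built into the hypothesis that $A$ is a Banach algebra, and the rest is bookkeeping. The only subtle point worth flagging is that the argument uses sequentiality in an essential way, since the Uniform Boundedness Principle requires a countable index set; for a general net one would have to assume boundedness in $\|\cdot\|_M$ as an extra hypothesis, which is exactly the role played by Corollary~\ref{117}.
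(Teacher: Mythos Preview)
Your proof is correct and follows exactly the route the paper intends: the corollary is stated without proof, as an immediate consequence of Remark~\ref{116} (the Uniform Boundedness argument you spell out) combined with Corollary~\ref{117}. One small quibble with your closing remark: the Uniform Boundedness Principle itself applies to arbitrary families of operators; sequentiality enters one step earlier, in passing from \emph{convergence} of $(L_n(a))$ and $(R_n(a))$ to their \emph{boundedness}, which fails for general nets.
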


\begin{example}\label{119}
For $1\le p<\infty$, denote by $\ell_p$ the Banach space of complex-valued sequences $x=(x_n)$ such that
\[
\|x\|_p:=\Bigl(\sum_n{|x_n|^p}\Bigr)^{\frac{1}{p}}<\infty.
\]
Under pointwise multiplication, $\ell_p$ is a commutative Banach algebra with a sequential approximate identity
(e.g., the sequence $(e_n)$, where $e_n(k)=1$ for every $1\le k\le n$, and $e_n(k)=0$ for every $k>n$).
Furthermore, it is not hard to see that the multiplier norm on $\ell_p$ coincides with the supremum norm.
Together with the preceding corollary and the fact that $\ell_p$ is dense in the algebra $c_0$ of complex-valued
sequences converging to zero, this yields the well-known factorization property $\ell_p=\ell_p\, c_0$.
\end{example}

We finish this subsection with some useful observations on the approximate ideal.

\begin{lemma}\label{120}
Let $A$ be a Banach algebra such that $A_M$ has a bounded approximate identity,
and let $B$ be a Segal extension of $A$ with a bounded approximate identity. Then:
\begin{enumerate}\itemsep0pt
\item[\rm{(i)}] $A^2$ is dense in $E_A$;
\item[\rm{(ii)}] $AB=BA=E_A$;
\item[\rm{(iii)}] $B$ is a Segal extension of $E_A$.
\end{enumerate}
\end{lemma}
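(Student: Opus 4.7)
The plan is to treat (i)--(iii) in order; (i) is essentially already in hand from the proof of Proposition~\ref{113}, (ii) is a second invocation of Cohen--Hewitt (this time with $B$ playing the role of the factoring algebra), and (iii) combines (ii) with the bounded approximate identity of $B$.

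For (i), I would just observe that the argument buried in the proof of Proposition~\ref{113}(i) already yields this statement: $A$ is a Banach $\widetilde{A}_M$-bimodule by Lemma~\ref{15}, $\widetilde{A}_M$ carries a bounded approximate identity by Lemma~\ref{110}(b), and the Cohen--Hewitt Factorization Theorem then identifies the $\|\cdot\|$-closure of $A^2$ in $A$ with $A\widetilde{A}_M=E_A$.

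For (ii), I would regard $B$ as a subalgebra of $\widetilde{A}_M$ (per the convention fixed just after Remark~\ref{19}). By Lemma~\ref{15}, $A$ is a Banach $B$-bimodule, and $B$ has a bounded approximate identity by hypothesis, so a second application of Cohen--Hewitt shows that $AB$ and $BA$ are both $\|\cdot\|$-closed subsets of $A$. The chain of inclusions
\[
A^2 \;\subseteq\; AB \;\subseteq\; A\widetilde{A}_M \;=\; E_A,
\]
together with part (i), then sandwiches $AB$ between $E_A$ (on closing the left-hand side under $\|\cdot\|$) and $E_A$ (on the right), giving $AB=E_A$; the argument for $BA=E_A$ is symmetric. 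For (iii), the Segal estimate $\|\cdot\|_B\le l\,\|\cdot\|$ makes $E_A\hookrightarrow B$ a continuous injective homomorphism of Banach algebras. That $E_A$ is an ideal of $B$ is now immediate from (ii): $B\,E_A=B(BA)\subseteq BA=E_A$ and $E_A\,B=(AB)B\subseteq AB=E_A$. For density, let $(e_\alpha)$ be the bounded approximate identity of $B$: for each $a\in A$, $ae_\alpha\in AB=E_A$ and $\|ae_\alpha-a\|_B\to 0$, so $A$ sits inside the $\|\cdot\|_B$-closure of $E_A$; since $A$ is $\|\cdot\|_B$-dense in $B$, so is $E_A$.

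The one thing to stay alert about is the bookkeeping of the two norms in play: the closures in (i) and (ii) are taken in the (strictly stronger) algebra norm $\|\cdot\|$ on $A$, whereas the density claim in (iii) is with respect to the weaker norm $\|\cdot\|_B$ on $B$. The passage between them is painless because $\|\cdot\|_B\le l\,\|\cdot\|$ on $A$, so any $\|\cdot\|$-dense subset of $A$ is automatically $\|\cdot\|_B$-dense. No step beyond Cohen--Hewitt is needed.
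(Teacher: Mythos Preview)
Your proof is correct and follows essentially the same route as the paper: part~(i) by reference to Proposition~\ref{113}, part~(ii) by a second Cohen--Hewitt application together with the sandwich $A^2\subseteq AB\subseteq E_A$, and part~(iii) by combining~(ii) with the density of $A$ in~$B$. The only cosmetic difference is that, for density in~(iii), the paper invokes $B=B^2$ (itself a Cohen--Hewitt consequence) rather than the approximate identity directly, but this is the same idea in slightly different dress.
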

\begin{proof}
(i) This was already observed in the proof of Proposition~\ref{113}(i).

(ii) Using the Cohen--Hewitt Factorization Theorem again, one deduces that $AB$ and $BA$ are closed ideals of~$A$.
The identities now follow from (i) and the inclusions $A^2\subseteq AB\subseteq E_A$ and $A^2\subseteq BA\subseteq E_A$.

(iii) It is enough to prove that $E_A$ is dense in $B$. But this is immediate from~(ii)
and the facts that $A$ is dense in $B$ and that $B=B^2$.
\end{proof}
\begin{remark}\label{rem:eanota}
For the majority of Segal algebras of interest to us, we have $E_A\ne A$.
Even in the setting of order unit \CSeg*s discussed in Section~\ref{sect:c*segal},
this is the typical situation as is illustrated in Example~\ref{exam:eanota} below.
\end{remark}

\subsection{Multipliers of norm irregular Banach algebras}

\noindent
Multiplier modules will play a central role in this paper, as they allow us to reduce the study of certain properties
of~$A$ to those of~$E_A$. For a general reference on multiplier modules, see \cite{mR,mD}.

\begin{definition}\label{121}
Let $A$ be a Banach algebra such that $A_M$ has a bounded approximate identity, and let $B$ be a Segal extension of~$A$.
By a \textit{$B$-multiplier\/} of $A$ we mean a pair $m=(m_l,m_r)$ of mappings from $B$ into $A$ such that
\[
m_l(xy)=m_l(x)y, \ m_r(xy)=xm_r(y), \ \mbox{and} \ \;xm_l(y)=m_r(x)y \quad (x,y\in B).
\]
Each $a\in A$ determines a $B$-multiplier $(l_a,r_a)$ of $A$ given by $l_a(x):=ax$ and $r_a(x):=xa$ for $x\in B$.
We write $M_B(A)$ for the set of $B$-multipliers of~$A$.
Let $\LL(B,A)$ denote the Banach algebra of bounded linear mappings from $B$ into~$A$; this is indeed an algebra
because $A$ is a Segal algebra in~$B$.
It is routine to verify that $M_B(A)$ is a closed subalgebra of $\LL(B,A)\oplus_{\infty}\LL(B,A)^{\textup{op}}$.
In addition, $M_B(A)$ carries a natural $B$-bimodule structure defined by
\[
x \cdot m := (l_{m_r(x)},r_{m_r(x)}) \;\mbox{ and } \;m \cdot x := (l_{m_l(x)},r_{m_l(x)}) \quad (m \in M_B(A), x \in B).
\]
There is a continuous injective algebra and $B$-bimodule homomorphism $\varphi\colon A\rightarrow M_B(A)$ given
by $\varphi(a):=(l_a,r_a)$ for $a\in A$. In case $B$ has a bounded approximate identity, the image of $E_A$ under $\varphi$
is a closed faithful ideal of~$M_B(A)$.
\end{definition}

\begin{remark}\label{122}
If $A$ and $B$ coincide, then $M_B(A)$ is just the usual \textit{multiplier algebra\/} $M(A)$ of~$A$.
As mentioned in the Introduction, multiplier algebras of Segal algebras have attracted some attention;
see also, e.g., \cite{hK,bT}. However, the drawback in the norm irregular case is that, although they can be considered
as faithful ideals of $M(A)$, neither $E_A$ nor $A$ is closed in it.
\end{remark}

The \textit{strict topology\/} on $M_B(A)$ is defined by the seminorms
\begin{equation*}
m\mapsto \|m_l(x)\|+\|m_r(x)\| \quad (x\in B).
\end{equation*}

We shall require the following lemma, which can be found in \cite[Theorem~3.5]{ST} and \cite[Theorem~2.8]{mD}, for example.
(Regrettably, the concept of a faithful module is confused with \textit{non-degenerate module\/} in~\cite{mD}; however,
our assumptions in the following straighten any ambiguity out.)

\begin{lemma}\label{123}
Let $A$ be a Banach algebra such that $A_M$ has a bounded approximate identity, and let $B$ be a Segal extension of
$A$ with a bounded approximate identity. Then:
\begin{enumerate}\itemsep0pt
\item[\rm{(i)}] $M_B(A)$ equipped with the strict topology is a complete locally convex algebra;
\item[\rm{(ii)}] $\varphi(E_A)$ is strictly dense in $M_B(A)$;
\item[\rm{(iii)}] if $\phi$ is a strictly continuous $B$-bimodule homomorphism from $A$ into $M(B)$,
then it has a unique extension
$\widetilde{\phi}$ to a strictly continuous $B$-bimodule homomorphism of $M_B(A)$ into $M(B)$.
\end{enumerate}
\end{lemma}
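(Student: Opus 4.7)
The plan is to verify the three assertions in order, viewing $M_B(A)$ as a closed subalgebra of $\LL(B,A)\oplus_\infty \LL(B,A)^{\mathrm{op}}$ as in Definition~\ref{121}. For part~(i), I would establish strict completeness of $M_B(A)$ by a pointwise limit construction. Given a strictly Cauchy net $(m_\alpha)$ in $M_B(A)$, the nets $(m_{\alpha,l}(x))$ and $(m_{\alpha,r}(x))$ are Cauchy in $A$ for every $x\in B$, so they converge to elements $m_l(x),m_r(x)\in A$. The limit maps are linear, and the Banach--Steinhaus theorem makes them bounded $B\to A$. The three multiplier identities of Definition~\ref{121} pass to the norm limit by continuity of multiplication (using Lemma~\ref{15} for the mixed products). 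Thus $m:=(m_l,m_r)\in M_B(A)$ and $m_\alpha\to m$ strictly; separate strict continuity of the algebra operations follows by similar pointwise estimates, using that each of $m_l,m_r,n_l,n_r$ is bounded as a map $B\to A\subseteq B$.

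For part~(ii), fix $m\in M_B(A)$ and let $(e_\alpha)$ be a bounded approximate identity for $B$. The crucial preliminary observation is that $m_l(B),m_r(B)\subseteq E_A$. Indeed, for $z\in B$ the relation $m_l(xy)=m_l(x)y$ gives $m_l(z)\,e_\alpha=m_l(ze_\alpha)$, and since $ze_\alpha\to z$ in $\|\cdot\|_B$ while $m_l\colon B\to A$ is continuous, one obtains $m_l(z)=\lim_\alpha m_l(z)\,e_\alpha\in AB=E_A$ using Lemma~\ref{120}(ii) and closedness of $E_A$ in~$A$; the symmetric argument, via $m_r(z)=\lim_\alpha e_\alpha m_r(z)\in BA=E_A$, handles $m_r$. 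Setting $a_\alpha:=m_l(e_\alpha^2)\in E_A$, the multiplier relations yield, for $z\in B$,
\[
l_{a_\alpha}(z)=m_l(e_\alpha^2 z)\qquad\text{and}\qquad r_{a_\alpha}(z)=m_r(z)\,e_\alpha^2.
\]
The first expression converges to $m_l(z)$ in $\|\cdot\|$ by continuity of $m_l$ and the fact that $e_\alpha^2 z\to z$ in $\|\cdot\|_B$; the second converges to $m_r(z)$ in $\|\cdot\|$ because $m_r(z)\in E_A$ and $(e_\alpha^2)$ is, by Lemma~\ref{110}, a bounded approximate identity for~$A_M$, which by Proposition~\ref{113}(ii) acts as an $A$-norm approximate identity on~$E_A$. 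Hence $\varphi(a_\alpha)\to m$ strictly.

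For part~(iii), use (ii) to represent each $m\in M_B(A)$ as the strict limit of a net $(\varphi(a_\beta))$ with $a_\beta\in E_A$. Strict continuity of $\phi$ makes $(\phi(a_\beta))$ a strictly Cauchy net in $M(B)$, which is strictly complete by (i) applied with $A$ replaced by~$B$; define $\widetilde{\phi}(m)$ to be its strict limit. Well-definedness (independence of the approximating net), linearity, the homomorphism and $B$-bimodule compatibilities, and strict continuity all follow by transferring the corresponding properties of $\phi$ through the strict limit, and uniqueness is immediate from the strict density established in~(ii). The principal obstacle in the argument is the preliminary containment $m_l(B),m_r(B)\subseteq E_A$ in~(ii): without it only $\|\cdot\|_B$-convergence of $r_{a_\alpha}(z)$ would be available, whereas strict convergence in $M_B(A)$ demands $\|\cdot\|$-convergence in~$A$.
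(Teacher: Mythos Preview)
Your approach differs from the paper's: the authors simply cite \cite[Theorem~3.5]{ST} for parts~(i) and~(ii), and for~(iii) they invoke \cite[Theorem~2.8]{mD} after first observing that $\phi(E_A)=\phi(A)B\subseteq M(B)B\subseteq B$ and that $M_B(E_A)=M_B(A)$ (since every $B$-multiplier of $A$ has image in~$E_A$, the preliminary fact you also prove). They then verify separately that the resulting extension agrees with $\phi$ on all of~$A$, not merely on~$E_A$---a point you pass over, though it does follow from strict continuity of $\phi$ on $A$ together with strict density of $E_A$ in~$A\subseteq M_B(A)$. Your self-contained arguments have the merit of showing concretely why the cited results hold in this module setting.

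Two technical points need repair. In~(i), the appeal to Banach--Steinhaus is not valid as stated: a strictly Cauchy \emph{net} $(m_\alpha)$ need not be pointwise bounded (it is only eventually bounded at each $x\in B$, with threshold depending on~$x$), so uniform boundedness does not follow. The clean fix is to bypass this: once the pointwise limits $m_l,m_r$ satisfy the three multiplier identities, boundedness follows from the closed graph theorem. Indeed, if $b_n\to 0$ in $B$ and $m_l(b_n)\to c$ in~$A$, then for every $a\in B$ one has $a\,m_l(b_n)=m_r(a)\,b_n\to 0$ in~$A$ by Lemma~\ref{15}, while also $a\,m_l(b_n)\to ac$; hence $ac=0$ for all $a\in B$ and $c=0$ because $B$ has a bounded approximate identity. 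In~(ii), the assertion that $(e_\alpha^2)$ is a bounded approximate identity for~$A_M$ via Lemma~\ref{110} is not right: $(e_\alpha)\subset B$ need not lie in~$A$ at all, and Lemma~\ref{110} only asserts existence of some b.a.i.\ for~$A_M$, not that this particular net serves. The conclusion you need---that $m_r(z)\,e_\alpha^2\to m_r(z)$ in $\|\cdot\|$ whenever $m_r(z)\in E_A$---follows directly from Lemma~\ref{120}(ii) and Lemma~\ref{15}: writing $m_r(z)=ab$ with $a\in A$, $b\in B$, one has $\|m_r(z)e_\alpha^2-m_r(z)\|=\|a(be_\alpha^2-b)\|\le l\,\|a\|\,\|be_\alpha^2-b\|_B\to 0$.
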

\begin{proof}
As part~(i) and part~(ii) follow directly from \cite[Theorem~3.5]{ST}, we merely add the necessary details
to obtain part~(iii) from \cite[Theorem~2.8]{mD}. Let $\psi$ be the restriction of $\phi$ to~$E_A$. Since
\[
\phi(E_A)=\phi(AB)=\phi(A)B\subseteq M(B)B\subseteq B,
\]
it follows that $\psi$ is a $B$-bimodule homomorphism into~$B$. By \cite[Theorem~2.8]{mD},
$\psi$ extends uniquely to a strictly continuous $B$-bimodule homomorphism from $M_B(E_A)$ into $M_B(B)=M(B)$.
On the other hand, as the image of every $B$-multiplier of $A$ is contained in $E_A$, it follows that the sets
$M_B(E_A)$ and $M_B(A)$ coincide. Thus, we have a strictly continuous $B$-bimodule homomorphism
$\widetilde{\psi}\colon M_B(A)\to M(B)$. It remains to show that $\widetilde{\psi}$ extends~$\phi$ (which we then denote
by~$\widetilde{\phi}$). To see this, let $a\in A$. Then, for all $x\in B$, we have
\[
\phi(a)x=\phi(ax)=\psi(ax)=\widetilde{\psi}(ax)=\widetilde{\psi}(a)x,
\]
which implies that $\phi(a)=\widetilde{\psi}(a)$ because $B$ is a faithful ideal of $M(B)$.
\end{proof}

At the end of this section, we describe a universal property of the multiplier module.

\begin{proposition}\label{124}
Let $A$ be a Banach algebra such that $A_M$ has a bounded approximate identity. Then, for every Segal extension $B$ of~$A$
with a bounded approximate identity, $(M_B(A),\varphi)$ satisfies the following conditions:
\begin{enumerate}\itemsep0pt
\item[\rm{(i)}] $M_B(A)$ is a faithful $B$-bimodule;
\item[\rm{(ii)}] $\varphi(E_A)=M_B(A)\cdot B=B\cdot M_B(A)$;
\item[\rm{(iii)}] if\/ $V$ is a faithful $B$-bimodule and $\phi$ is an injective $B$-bimodule
homomorphism from $A$ into $V$ such that $\phi(E_A)=V\cdot B=B\cdot V$, then there exists
a unique injective $B$-bimodule homomorphism $\psi$ of\/ $V$ into $M_B(A)$ such that $\varphi=\psi\circ\phi$.
\end{enumerate}
\end{proposition}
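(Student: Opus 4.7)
I would exploit the explicit formulas $x\cdot\varphi(a)=\varphi(xa)$ and $\varphi(a)\cdot x=\varphi(ax)$ (immediate from the definitions of $\varphi$ and of the $B$-bimodule structure on $M_B(A)$) to dispatch~(i) and~(ii), and in~(iii) to transport between $V$ and $A$ via the partial inverse $\phi^{-1}\colon\phi(E_A)\to E_A$, whose existence is guaranteed by the hypothesis $\phi(E_A)=V\cdot B=B\cdot V$ together with injectivity of~$\phi$.

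For~(i), fix $x\in B\setminus\{0\}$: since $B$ has a bounded approximate identity, $xB=\{0\}$ would force $x=0$, so by density of $A$ in $B$ there is $a\in A$ with $xa\ne0$, whence $x\cdot\varphi(a)=\varphi(xa)\ne0$ by injectivity of~$\varphi$; the right version is symmetric. For~(ii), the inclusion $\varphi(E_A)\subseteq B\cdot M_B(A)\cap M_B(A)\cdot B$ uses Lemma~\ref{120}(ii) to factor any $a\in E_A$ as $a=bc$ with $b\in B$, $c\in A$, giving $\varphi(a)=b\cdot\varphi(c)$; the reverse inclusion observes that, for $m\in M_B(A)$ and $x\in B$, one has $x\cdot m=\varphi(m_r(x))$, and a Cohen--Hewitt factorization $x=yz$ in $B$ together with $m_r(yz)=ym_r(z)$ places $m_r(x)$ in $B\cdot A=E_A$. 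The identity $M_B(A)\cdot B=\varphi(E_A)$ is proved symmetrically via $m_l(yz)=m_l(y)z$ and $E_A=AB$.

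For~(iii), the hypothesis combined with injectivity of~$\phi$ yields, for every $v\in V$ and $x\in B$, unique elements $\phi^{-1}(v\cdot x),\phi^{-1}(x\cdot v)\in E_A$. I would set
\[
\psi(v)_l(x):=\phi^{-1}(v\cdot x),\qquad\psi(v)_r(x):=\phi^{-1}(x\cdot v),
\]
and verify that $\psi(v)=(\psi(v)_l,\psi(v)_r)$ lies in $M_B(A)$. Boundedness of the two coordinates follows from the open mapping theorem applied to the continuous bijection $\phi\colon E_A\to\phi(E_A)$, whose target is closed in $V$ by Cohen--Hewitt. The three multiplier identities and the bimodule-homomorphism property of $\psi$ all reduce, after applying~$\phi$, to associativity of the $B$-action on~$V$ combined with $\phi$ being a $B$-bimodule map. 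The relation $\psi\circ\phi=\varphi$ is immediate from $\phi(a)\cdot x=\phi(ax)$. Uniqueness needs no further input: if $\psi'$ also satisfies the requirements, then for any $v\in V$ and $x\in B$, $(\psi'(v)-\psi(v))\cdot x=\psi'(v\cdot x)-\psi(v\cdot x)=0$, since $v\cdot x\in\phi(A)$ and $\psi,\psi'$ agree there, and analogously on the left; injectivity of~$\varphi$ then forces $\psi'(v)=\psi(v)$.

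The step I expect to be the main obstacle is the injectivity of~$\psi$. Unpacking shows $\psi(v)=0$ iff $v\cdot B=B\cdot v=\{0\}$, and to conclude $v=0$ one must use the faithfulness of~$V$ in the dual sense that no nonzero element of~$V$ is annihilated on both sides by~$B$. Reading the faithfulness hypothesis correctly in this way, and justifying it from the stated conditions (with the identity $\phi(E_A)=V\cdot B=B\cdot V$ singling out the non-degenerate part of~$V$), is the delicate point of the argument.
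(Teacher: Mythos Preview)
Your approach is essentially identical to the paper's: the paper dismisses (i) as ``straightforward'', proves (ii) by the same Cohen--Hewitt factorization $x=yz$ in $B$ (writing $m\cdot x=\varphi(m_l(y)z)\in\varphi(AB)=\varphi(E_A)$, symmetrically to your version with $m_r$), and for (iii) gives only the formula $\psi(v)=(\phi^{-1}(v\cdot{-}),\,\phi^{-1}({-}\cdot v))$, exactly as you propose, leaving all verifications implicit.

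Your flagged concern about injectivity is well taken and the paper does not address it either. With the paper's stated definition of a \emph{faithful} bimodule (for each nonzero $b\in B$ there exist module elements not annihilated by $b$), the implication $v\cdot B=B\cdot v=\{0\}\Rightarrow v=0$ does \emph{not} follow, and the additional hypothesis $\phi(E_A)=V\cdot B=B\cdot V$ does not supply it: a nonzero $v$ with $v\cdot B=\{0\}$ simply lies outside $V\cdot B$, which is no contradiction. The paper itself remarks (just before Lemma~\ref{123}) that the notions of faithful and non-degenerate module are sometimes conflated in the literature; the honest reading here is that ``faithful $B$-bimodule'' in (iii) is intended in the non-degenerate sense (no nonzero $v\in V$ is annihilated on both sides by $B$), in which case your argument goes through cleanly. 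You have not missed an argument---you have correctly located a terminological ambiguity that the paper's one-line proof glosses over.
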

\begin{proof}
(i) Straightforward.

(ii) In view of Lemma \ref{120}(ii), it is sufficient to show that $M_B(A)\cdot B$ is contained in~$\varphi(E_A)$.
Let $m\in M_B(A)$ and $x\in B$. Then there are $y,z\in B$ such that $x=yz$. One has
$m\cdot x=m\cdot yz=(l_{m_l(yz)},r_{m_l(yz)})=(l_{m_l(y)z},r_{m_l(y)z})\in\varphi(AB)=\varphi(E_A)$, as wanted.

(iii) The desired mapping $\psi$ is given by $\psi(v):=\phi_v$ for $v\in V$, where $\phi_v:=(\phi_{l,v},\phi_{r,v})$
is such that $\phi_{l,v}(x):=\phi^{-1}(v\cdot x)$ and $\phi_{r,v}(x):=\phi^{-1}(x\cdot v)$ for each $x\in B$.
\end{proof}

\section{\CSeg*s}\label{sect:c*segal}

\noindent
In this section, we develop the basics of a theory of Segal algebras in \C*s, with an emphasis on the order structure.
In our main results, Theorems~\ref{211} and~\ref{214}, we characterize \CSeg*s with an order unit.

\subsection{General properties of \CSeg*s}

\begin{definition}\label{21}
We call $A$ a \textit{\CSeg*\/} if it has a Segal extension $(C,\iota)$, where $C$ is a \C*.
We say $A$ is \textit{self-adjoint\/} if $\iota(A)$ is closed under the involution of~$C$.
\end{definition}

The following lemma shows that the theory developed in the previous section will be applicable in the context of \CSeg*s.

\begin{lemma}\label{22}
Let $A$ be a \CSeg* in the \C*~$C$. Then there is a positive constant~$l$ such that
\[
\|a\|_M\le l\,\|a\|_C\le l^2\|a\|_M
\]
for all $a\in A$. Furthermore, $A_M$ has a bounded approximate identity which is contractive under the norm on~$C$.
\end{lemma}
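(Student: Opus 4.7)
I would split the proof into the two claims of the lemma, with Lemma~\ref{15} as the principal tool. For the inequality $\|a\|_M\le l\,\|a\|_C$, I would apply Lemma~\ref{15} with $b\in A$ playing the role of the Segal-algebra element and $a$, viewed as a member of~$C$, as the element of the containing Banach algebra: for $\|b\|\le 1$ this gives $\|ab\|\le l\,\|b\|\,\|a\|_C\le l\,\|a\|_C$ and, symmetrically, $\|ba\|\le l\,\|a\|_C$, and taking the supremum over~$b$ yields the bound.

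For the reverse inequality $\|a\|_C\le l\,\|a\|_M$, my plan is to exploit the \C*-identity $\|a\|_C^2=\|a^*a\|_C$. Since $A$ is a two-sided ideal of~$C$, the element $a^*a$ lies in~$A$; being positive in~$C$, it satisfies $\|a^*a\|_C=r_C(a^*a)$. Iterating the submultiplicativity $\|xy\|\le\|x\|_M\|y\|$ valid on~$A$ yields $\|(a^*a)^n\|\le\|a^*a\|_M^{\,n-1}\|a^*a\|$, and combining this with $\|\cdot\|_C\le l\,\|\cdot\|$ and taking $n$-th roots (Gelfand's formula) gives $r_C(a^*a)\le\|a^*a\|_M$, so $\|a\|_C^2\le\|a^*a\|_M$. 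To close the loop I would dominate $\|a^*a\|_M$ by $l\,\|a\|_M\|a\|_C$ through the two factorizations $a^*ab=a^*(ab)$ and $ba^*a=(ba^*)a$: for $\|b\|\le 1$, Lemma~\ref{15} gives $\|a^*(ab)\|\le l\,\|ab\|\,\|a^*\|_C\le l\,\|a\|_M\|a\|_C$, while the right-multiplier estimate followed by Lemma~\ref{15} applied to $\|ba^*\|$ gives $\|(ba^*)a\|\le\|a\|_M\,\|ba^*\|\le l\,\|a\|_M\|a\|_C$. Substituting and dividing by $\|a\|_C$ (trivially for $a=0$) delivers $\|a\|_C\le l\,\|a\|_M$.

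For the bounded approximate identity, I would begin with a contractive approximate identity $(f_\alpha)$ of the \C*~$C$, choose $e_\alpha\in A$ with $\|e_\alpha-f_\alpha\|_C$ decaying along the net (possible by the density of~$A$ in~$C$) and rescale to ensure $\|e_\alpha\|_C\le 1$. A standard estimate confirms that $(e_\alpha)$ is again a contractive approximate identity of~$C$, and the first inequality of the lemma then transports the $\|\cdot\|_C$-convergence into $\|\cdot\|_M$-convergence via $\|ae_\alpha-a\|_M\le l\,\|ae_\alpha-a\|_C\to 0$ (and symmetrically on the other side), exhibiting $(e_\alpha)$ as a bounded approximate identity of~$A_M$ that is contractive under the norm of~$C$.

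The step I expect to require the most care is the bound $\|a^*a\|_M\le l\,\|a\|_M\|a\|_C$: in each factorization one must choose carefully which factor carries the multiplier norm and which inherits its bound from Lemma~\ref{15}, since a careless application readily leaves an uncontrolled factor of~$\|a\|$ (arising whenever $a$ itself is forced into the Segal-algebra role of Lemma~\ref{15}) that would break the reduction to~$\|a\|_M$.
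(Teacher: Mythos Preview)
Your proof is correct and reaches the same key intermediate inequality $\|a\|_C^2\le l\,\|a\|_M\|a\|_C$ as the paper, but by a different route for the reverse bound. The paper first extends $\|\cdot\|_M$ by density to an algebra norm $\|\cdot\|_M'$ on all of~$C$ (using the already-established estimate $\|\cdot\|_M\le l\,\|\cdot\|_C$), and then invokes the classical fact (Rickart, 4.8.4) that for any algebra norm $p$ on a \C* one has $\|c\|_C^2\le p(c)\,p(c^*)$; applied with $p=\|\cdot\|_M'$ this yields $\|a\|_C^2\le\|a\|_M\|a^*\|_M'\le l\,\|a\|_M\|a\|_C$ immediately. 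Your argument replaces the extension-plus-Rickart step with a direct spectral-radius computation on $a^*a\in A$ together with an explicit estimate of $\|a^*a\|_M$ via Lemma~\ref{15}, which is a bit longer but entirely self-contained and avoids both the extension to~$C$ and the external reference. For the approximate identity the paper simply cites \cite[Proposition~13.1]{DB}, whereas you supply the standard explicit construction; the only point to tidy up there is that ``decaying along the net'' needs the usual reindexing (say over $\Omega\times(0,1]$) to be made precise for an arbitrary directed set~$\Omega$.
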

\begin{proof}
Let $l>0$ be as in Lemma~\ref{15} and take $a\in A$; then
\[
\|a\|_M=\sup_{\|b\|\le 1}\{\|ab\|,\|ba\|\}\le\sup_{\|b\|\le 1}l\,\|b\|\,\|a\|_C=l\,\|a\|_C.
\]
From this and the density of $A$ in $C$ it is easy to deduce that $\|\cdot\|_M$ has an extension to an algebra norm
$\|\cdot\|'_M\colon C\rightarrow\RR$ satisfying $\|c\|'_M\le l\,\|c\|_C$ for all $c\in C$. It follows that
\[
\|a\|_C^2\le\|a\|_M\,\|a^*\|'_M\le l\,\|a\|_M\|a^*\|_C=l\,\|a\|_M\|a\|_C,
\]
where the first inequality holds for any algebra norm in the \C*~$C$, see for example \cite[4.8.4]{cR}.
Combining the two estimates above yields the desired inequalities. The second statement is \cite[Proposition~13.1]{DB}.
\end{proof}

The next two results describe the ideal structure of \CSeg*s.
In their proofs, $l>0$ designates a constant as in Lemma~\ref{15}.

\begin{lemma}\label{23}
Let $A$ be a \CSeg* in the \C*~$C$. Then the following conditions are equivalent:
\begin{enumerate}\itemsep0pt
\item[\rm{(a)}] $A$ has an approximate identity;
\item[\rm{(b)}] every closed ideal\/ $I$ of $A$ satisfies $I=A\cap\overline{I}$, where $\overline{I}$ denotes the closure of $I$ in~$C$.
\end{enumerate}
\end{lemma}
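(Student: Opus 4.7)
The plan is to prove the two implications separately; only the reverse direction is substantive.

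For $\textup{(a)}\Rightarrow\textup{(b)}$, fix an approximate identity $(e_\alpha)$ of $A$ and a closed ideal $I\subseteq A$. The inclusion $I\subseteq A\cap\overline{I}$ is immediate. For the converse, given $a\in A\cap\overline{I}$, I would choose a sequence $(b_n)\subseteq I$ with $\|b_n-a\|_C\to 0$. Lemma~\ref{15} applied with $B=C$ yields
\[
\|e_\alpha b_n-e_\alpha a\|\le l\,\|e_\alpha\|\,\|b_n-a\|_C\longrightarrow 0
\]
for each fixed $\alpha$; since every $e_\alpha b_n$ lies in $I$ and $I$ is closed in $A$, it follows that $e_\alpha a\in I$. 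Letting $\alpha$ vary and using $\|e_\alpha a-a\|\to 0$ then forces $a\in I$.

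For $\textup{(b)}\Rightarrow\textup{(a)}$, the strategy is to apply~(b) to the approximate ideal $E_A=A\widetilde{A}_M$, which by Lemma~\ref{22} is a well-defined closed ideal of $A$. The heart of the argument is to verify $\overline{E_A}=C$: since $A^2\subseteq E_A$ it suffices to show $\overline{A^2}=C$. For this I would use that every $c\in C$ factors as a product $c=c_1c_2$ with $c_1,c_2\in C$ --- for instance via the polar decomposition $c=u|c|^{1/2}\cdot|c|^{1/2}$ in $M(C)$, noting that $u|c|^{1/2}\in C$ --- then approximate each $c_i$ in the $C$-norm by elements of $A$ and invoke joint continuity of multiplication in~$C$. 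Once $\overline{E_A}=C$ is in hand, condition~(b) gives $E_A=A\cap C=A$, and Proposition~\ref{113}(iii) supplies an approximate identity for~$A$.

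The main obstacle is choosing the correct closed ideal to feed into~(b). By Lemma~\ref{120}(i) the first-instinct candidate --- the $A$-norm closure of $A^2$ --- actually coincides with $E_A$, but the benefit of phrasing the argument through $E_A$ is that Proposition~\ref{113}(iii) automatically delivers an approximate identity once we know $E_A=A$. The only point at which the \C*-structure of $C$ is genuinely used is in the factorization-plus-approximation step that yields $\overline{A^2}=C$; everything else is formal.
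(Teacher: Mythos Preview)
Your argument is correct and matches the paper's proof almost exactly. For $\textup{(a)}\Rightarrow\textup{(b)}$ the paper performs the same estimate in a single $\varepsilon$-step rather than first concluding $e_\alpha a\in I$, but the content is identical; for $\textup{(b)}\Rightarrow\textup{(a)}$ the paper simply invokes Lemma~\ref{120}(iii) (whose proof is precisely the ``$C=C^2$ plus density of $A$'' observation you spell out via polar decomposition) to obtain $\overline{E_A}=C$, and then concludes $E_A=A\cap C=A$ just as you do.
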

\begin{proof}
For the implication (b) $\Rightarrow$ (a), it is sufficient to show that $E_A$ and $A$ coincide, by Proposition~\ref{113}(iii).
Since $E_A$ is a closed ideal of $A$ and $C$ is its Segal extension, see Lemma~\ref{120}(iii),
the hypothesis yields that $E_A=A\cap\overline{E_A}=A\cap C=A$, as desired.

For the implication (a) $\Rightarrow$ (b), let $I$ be a closed ideal of~$A$
and let $(e_{\alpha})_{\alpha\in\Omega}$ be an approximate identity for~$A$.
Given $x\in A\cap\overline{I}$ and $\eps>0$, there exist $\alpha\in\Omega$ and $y\in I$ such that
$\|x-xe_{\alpha}\|<\frac{\eps}{2}$ and $\|x-y\|_C<\frac{\eps}{2l\|e_{\alpha}\|}$.
Since $ye_{\alpha}\in I$ and
\begin{equation*}
\|x-ye_{\alpha}\|\le\|x-xe_{\alpha}\|+\|xe_{\alpha}-ye_{\alpha}\|
                            \le\|x-xe_{\alpha}\|+l\,\|e_{\alpha}\|\,\|x-y\|_C<\eps,
\end{equation*}
it follows that $A\cap\overline{I}$ is contained in~$I$. The reverse inclusion is trivial.
\end{proof}

\begin{proposition}\label{24}
Let $A$ be a \CSeg*. For every closed ideal $I$ of $A$, one has:
\begin{enumerate}\itemsep0pt
\item[\rm{(i)}] $I$ is a \CSeg*;
\item[\rm{(ii)}] $A/I$ is a \CSeg* whenever $A$ has an approximate identity.
\end{enumerate}
In particular, $I$ and $A/I$ have an approximate identity whenever $A$ has an approximate identity.
\end{proposition}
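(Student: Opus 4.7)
The plan is to realize $I$ as a dense ideal of a closed ideal of~$C$, and $A/I$ as a dense ideal of the corresponding quotient \C*. Throughout, fix $l>0$ as in Lemma~\ref{15} for the Segal extension $(C,\iota)$ of~$A$, and let $\overline{I}$ denote the closure of $I$ in~$C$, a \C* as a closed two-sided ideal of~$C$. For~(i), the inclusion $I\hookrightarrow\overline{I}$ is injective, continuous (Lemma~\ref{22}), and has dense image by construction; the one delicate point is that $I$ is an ideal of~$\overline{I}$. Given $x\in I$ and $y\in\overline{I}$, pick $(a_n)\subseteq I$ with $\|a_n-y\|_C\to 0$; Lemma~\ref{15} yields $\|xa_n-xa_m\|\le l\,\|x\|\,\|a_n-a_m\|_C$, so $(xa_n)$ is Cauchy in $A$ and converges to some $b\in A$. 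Continuity of $A\hookrightarrow C$ forces $b=xy$, so $xy\in A$, and since $I$ is closed in~$A$ we get $xy\in I$; the left-multiplication case is symmetric. Faithfulness of $I$ is automatic because $I$ is a dense ideal of the \C*~$\overline{I}$.

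For~(ii), assume $A$ has an approximate identity. Lemma~\ref{23} gives $I=A\cap\overline{I}$, so the quotient map $\pi\colon C\to C/\overline{I}$ restricts on $A$ to a homomorphism with kernel exactly $I$, inducing an injective continuous homomorphism $\bar{\iota}\colon A/I\to C/\overline{I}$. Its image is dense in $C/\overline{I}$ (by density of $A$ in~$C$) and is an ideal (because $A$ is an ideal of $C$ by Proposition~\ref{17}). Faithfulness of $A/I$ follows from the same density-in-a-\C* argument used in~(i). Thus $(C/\overline{I},\bar{\iota})$ is a Segal extension, and $A/I$ is a \CSeg*.

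For the ``in particular'' clause, let $(e_\alpha)$ be an approximate identity of~$A$. Then $([e_\alpha])$ is clearly an approximate identity of~$A/I$ under the quotient norm. For~$I$, I will show $E_I=I$ and invoke Proposition~\ref{113}(iii). By Lemma~\ref{22} applied to the \CSeg* $I\subseteq\overline{I}$ obtained in~(i), there exists an approximate identity $(v_\beta)\subseteq I$ which is contractive in $\|\cdot\|_C$. Fix $a\in I$ and $\eps>0$, and choose $\alpha$ with $\|a-ae_\alpha\|<\eps/2$. For this fixed $\alpha$, Lemma~\ref{15} gives
\[
\|ae_\alpha-av_\beta e_\alpha\|=\|(a-av_\beta)e_\alpha\|\le l\,\|e_\alpha\|\,\|a-av_\beta\|_C,
\]
and the right-hand side tends to $0$ as $\beta\to\infty$ because $av_\beta\to a$ in~$\overline{I}$. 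Choose $\beta$ so that $\|ae_\alpha-av_\beta e_\alpha\|<\eps/2$. Since $v_\beta\in\overline{I}$ and $\overline{I}$ is an ideal of~$C$, the element $v_\beta e_\alpha$ lies in $A\cap\overline{I}=I$, so $av_\beta e_\alpha=a\cdot(v_\beta e_\alpha)\in I^2$, showing $a$ lies in the $\|\cdot\|$-closure of~$I^2$. Lemma~\ref{120}(i) now yields $E_I=I$, and Proposition~\ref{113}(iii) supplies the desired approximate identity. The main obstacle is this last step: it requires simultaneous control of three different nets---the approximate identity of~$A$, a bounded approximate identity of~$\overline{I}$ that can be taken inside~$I$, and the intermediate products---with repeated use of the Banach-bimodule estimate of Lemma~\ref{15} to pass between $\|\cdot\|_C$ and~$\|\cdot\|$.
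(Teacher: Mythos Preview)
Your proof is correct and follows essentially the same route as the paper's. The only cosmetic differences are that in~(i) you show $I$ is an ideal of~$\overline{I}$ while the paper shows directly that $\iota(I)$ is an ideal of~$C$, and in the final step you take the auxiliary approximate identity $(v_\beta)$ inside~$I$ via Lemma~\ref{22} (landing in~$I^2$), whereas the paper takes it in~$\overline{I}$ (landing in $I\overline{I}=E_I$); incidentally, your detour through $A\cap\overline{I}=I$ to place $v_\beta e_\alpha$ in~$I$ is unnecessary, since $v_\beta\in I$ and $I$ is an ideal of~$A$ already give $v_\beta e_\alpha\in I$.
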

\begin{proof}
Let $(C,\iota)$ be a Segal extension of $A$ and recall that $\iota(A)$ is a Segal algebra in $C$ with the norm
$\|\iota(a)\|_\iota :=\|a\|$ for $a\in A$, see Definition~\ref{16}.

Let $I$ be a closed ideal of~$A$, and let $J$ denote the closure of $\iota(I)$ in~$C$.
To prove~(i), it is enough to show that $\iota(I)$ is an ideal of~$C$ because every closed ideal in a \C* is a \C* as well.
Given $c\in C$ and $x\in I$, there is a sequence $(a_n)$ in $A$ such that $\|\iota(a_n)-c\|_C\rightarrow 0$.
Since $a_nx\in I$ for all $n\in\NN$ and
\[
\|\iota(a_nx)-c\iota(x)\|_\iota =\|\iota(a_n)\iota(x)-c\iota(x)\|_\iota\le{l\,\|x\|\,\|\iota(a_n)-c\|_C}\rightarrow 0,
\]
it follows that $c\iota(x)\in\iota(I)$. In a similar fashion, we see that $\iota(x)c\in I$ wherefore $\iota(I)$ is an ideal of $C$, as claimed.

Towards (ii), define $\kappa\colon A/I\rightarrow C/J$ by $\kappa(a + I):= \iota(a) + J$ for $a\in A$. In view of the commutative diagram below,
where the vertical arrows are the quotient mappings, it is routine to verify that $\kappa$ is a continuous homomorphism with dense image.
\begin{equation}\label{eq:comm-square}
\begin{CD}
A @> {\iota}>> C\\
   @V{}VV  @VV{}V\\
A/I @>{\kappa}>> C/J \nonumber
\end{CD}
\end{equation}
The injectivity of $\kappa$ is given by Lemma~\ref{23}. Indeed, take $a\in A$ such that $a+I\in\ker\kappa$.
Then $\iota(a)\in\iota(A)\cap J=\iota(I)$, so that $a\in I$ and hence $a+I=0$.

For the last assertion, it suffices to show that $I$ has an approximate identity.
Let $(e_{\alpha})_{\alpha\in\Omega}$ and $(f_\beta)_{\beta\in\Lambda}$ be approximate identities for~$A$ and~$J$, respectively.
Given $x\in I$ and $\eps>0$, we find $\alpha\in\Omega$ and $\beta\in\Lambda$ with
$\|x-e_\alpha x\|<\frac{\eps}{2}$ and $\|\iota(x)-\iota(x)f_\beta\|_C<\frac{\eps}{2l\|e_{\alpha}\|}$.
Since $\iota(e_{\alpha}x)f_{\beta}\in\iota(I)J=E_{\iota(I)}$ and
\begin{multline*}
\|\iota(x)-\iota(e_{\alpha}x)f_{\beta}\|_\iota\le\|\iota(x)-\iota(e_{\alpha}x)\|_\iota+\|\iota(e_{\alpha}x)-\iota(e_{\alpha}x)f_{\beta}\|_\iota\\
                        \le\|x-e_{\alpha}x\|+{l\,\|e_{\alpha}\|\,\|\iota(x)-\iota(x)f_{\beta}\|_C}<\frac{\eps}{2}+\frac{\eps}{2}=\eps,
\end{multline*}
it follows that $E_{\iota(I)}$ is dense in $\iota(I)$, whence $E_{\iota(I)}=\iota(I)$.
\end{proof}

The following consequence of the above results shows that the constructs of the previous section
fit well into the framework of \CSeg*s.

\begin{corollary}\label{25}
Let $A$ be a \CSeg* in~$C$. Then $E_A$ and $M_C(A)$ are \CSeg*s as well.
\end{corollary}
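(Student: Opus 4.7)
The first assertion is essentially immediate. Since $E_A = A\widetilde{A}_M$ is by construction a closed ideal of $A$ (see the discussion preceding Definition~\ref{114}), Proposition~\ref{24}(i) applies and gives that $E_A$ is a \CSeg*. Tracing that proof, the ambient \C* is the closure of $\iota(E_A)$ in $C$; by Lemma~\ref{120}(iii), $C$ is itself a Segal extension of $E_A$, so this closure equals $C$ and $E_A$ is in fact a \CSeg* in $C$ (with the restricted embedding).

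For $M_C(A)$, the plan is to embed it continuously into $M(C)$ and take the norm closure there. Apply Lemma~\ref{123}(iii) to the strictly continuous $C$-bimodule homomorphism $\iota\colon A\to C\hookrightarrow M(C)$ to obtain an extension $\widetilde{\iota}\colon M_C(A)\to M(C)$. The first routine checks are that $\widetilde{\iota}$ is a continuous injective algebra homomorphism: Lemma~\ref{15} gives the estimate $\|\widetilde{\iota}(m)\|_{M(C)}\le l\,\|m\|_{M_C(A)}$ (by bounding $\|m_l(c)\|_C$ through the factor $l$ coming from the $A\hookrightarrow C$ embedding), so $\widetilde{\iota}$ is continuous; and if $\widetilde{\iota}(m)=0$ then $m$ annihilates the faithful ideal $C$ of $M(C)$, forcing $m=0$. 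Let $\widetilde{C}$ be the norm closure of $\widetilde{\iota}(M_C(A))$ in $M(C)$. Since $\iota(E_A)=\widetilde{\iota}(\varphi(E_A))\subseteq\widetilde{\iota}(M_C(A))$ and $\iota(E_A)$ is dense in $C$ by Lemma~\ref{120}(iii), we have $C\subseteq\widetilde{C}\subseteq M(C)$. It then remains to verify (i) that $\widetilde{C}$ is a \Cs* of $M(C)$, using that the natural involution $m^{\ast}_l(c):=m_r(c^{\ast})^{\ast}$, $m^{\ast}_r(c):=m_l(c^{\ast})^{\ast}$ on $M_C(A)$ is intertwined by $\widetilde{\iota}$ with the $M(C)$-involution (so that $\widetilde{\iota}(M_C(A))$ and hence $\widetilde{C}$ is \ast-closed); and (ii) that $\widetilde{\iota}(M_C(A))$ is a dense ideal of $\widetilde{C}$: density is automatic, and given $d=\lim_n\widetilde{\iota}(m_n)\in\widetilde{C}$ together with $m\in M_C(A)$, the uniform convergence of $\widetilde{\iota}(m_n)\to d$ on the unit ball of $C$ combined with boundedness of $m_l,m_r\colon C\to A$ will force $m\cdot m_n$ to be Cauchy in the stronger $M_C(A)$-norm, converging to some $m''\in M_C(A)$ with $\widetilde{\iota}(m'')=\widetilde{\iota}(m)\cdot d$.

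The main obstacle lies in step~(i): the involution $m\mapsto m^{\ast}$ above only lands back in $M_C(A)$ when $A$ is self-adjoint (so that $m_r(c^{\ast})^{\ast}\in A^{\ast}\subseteq A$). In the fully general non-self-adjoint case a different ambient \C* must be identified, or the statement should be read in the self-adjoint setting that governs the remaining applications in the paper.
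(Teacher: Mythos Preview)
Your treatment of $E_A$ is exactly what the paper intends: it is a closed ideal of~$A$, so Proposition~\ref{24}(i) applies, and Lemma~\ref{120}(iii) identifies the surrounding \C* as~$C$ itself. The paper offers no proof of Corollary~\ref{25}, merely calling it a ``consequence of the above results,'' so for $M_C(A)$ there is nothing to compare against; your embedding into $M(C)$ is the natural route.

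There is, however, a gap in your verification of the ideal property that you did not flag. Your Cauchy argument for $m\cdot m_n$ in the $M_C(A)$-norm only controls the $l$-component: $(mm_n)_l=m_l\circ(m_n)_l$, so $\|m_l((m_n-m_{n'})_l(c))\|_A\le\|m_l\|\,\|(m_n-m_{n'})_l(c)\|_C$, and the right-hand side is governed by $\|\widetilde\iota(m_n)-\widetilde\iota(m_{n'})\|_{M(C)}$. But $(mm_n)_r=(m_n)_r\circ m_r$, so the varying factor $(m_n-m_{n'})_r$ sits on the \emph{outside}, and ``boundedness of $m_r$'' only controls the $C$-norm of its argument, not the $A$-norm of the output. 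The repair is to invoke the multiplier identity $m_r(x)y=xm_l(y)$ together with Lemma~\ref{15} and Proposition~\ref{113}(ii): for $a\in A$ one has $(m_n-m_{n'})_r(a)\in E_A$, hence
\[
\|(m_n-m_{n'})_r(a)\|=\lim_\alpha\|(m_n-m_{n'})_r(a)\,e_\alpha\|=\lim_\alpha\|a\,(m_n-m_{n'})_l(e_\alpha)\|\le l\,\|a\|\,\|\widetilde\iota(m_n-m_{n'})\|_{M(C)},
\]
which supplies the missing estimate.

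Your diagnosis of the self-adjointness obstacle is accurate. The involution $m\mapsto m^\ast$ you write down lands in $M_C(A^\ast)$ rather than $M_C(A)$, and there is no evident reason why the norm closures of the images of these two modules in $M(C)$ should coincide when $A\ne A^\ast$. The paper does not address this; given the remark preceding Proposition~\ref{28} (``at present we have no example of a \CSeg* whose approximate ideal is not self-adjoint'') and the fact that every subsequent application is in the self-adjoint setting, it is reasonable to read the $M_C(A)$ assertion as established only under that hypothesis.
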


Although a \CSeg* need not be self-adjoint, at present we have no example of a \CSeg* whose approximate ideal is not self-adjoint.
However, we have the following result.

\begin{proposition}\label{28}
Let $A$ be a \CSeg* in~$C$. Then the following conditions are equivalent:
\begin{enumerate}\itemsep0pt
\item[\rm{(a)}] $E_A$ is self-adjoint;
\item[\rm{(b)}] $M_C(A)$ is self-adjoint.
\end{enumerate}
Moreover, $E_A$ is self-adjoint whenever $A$ is self-adjoint.
\end{proposition}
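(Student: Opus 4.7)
The plan is to view $M_C(A)$ as a subalgebra of $M(C)$ via the injective homomorphism $\widetilde{\iota}\colon M_C(A)\to M(C)$ furnished by Lemma~\ref{123}(iii) applied to $\phi=\iota$; concretely, $\widetilde{\iota}(m)\,x=\iota(m_l(x))$ and $x\,\widetilde{\iota}(m)=\iota(m_r(x))$ for $m\in M_C(A)$ and $x\in C$, since $\widetilde{\iota}$ is a strictly continuous $C$-bimodule homomorphism extending $\iota$. Under this identification (together with $\iota(A)\subseteq C$), self-adjointness of $M_C(A)$ becomes the assertion that $\widetilde{\iota}(M_C(A))$ is closed under the involution of $M(C)$, and the link between (a) and (b) is the identity
\[
\iota(E_A)=\widetilde{\iota}(M_C(A))\,C=C\,\widetilde{\iota}(M_C(A))\qquad\text{in }M(C),
\]
obtained by applying the bimodule map $\widetilde{\iota}$ to the equalities $\varphi(E_A)=M_C(A)\cdot C=C\cdot M_C(A)$ from Proposition~\ref{124}(ii).

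Given this setup, the implication (b)$\Rightarrow$(a) is short: if $\widetilde{\iota}(M_C(A))^{*}=\widetilde{\iota}(M_C(A))$, then since $C$ is $*$-closed in $M(C)$, one has
\[
\iota(E_A)^{*}=(\widetilde{\iota}(M_C(A))\,C)^{*}=C\,\widetilde{\iota}(M_C(A))^{*}=C\,\widetilde{\iota}(M_C(A))=\iota(E_A),
\]
so $E_A$ is self-adjoint. For (a)$\Rightarrow$(b), given $m=(m_l,m_r)\in M_C(A)$, I will build an element $m^{*}\in M_C(A)$ with $\widetilde{\iota}(m^{*})=\widetilde{\iota}(m)^{*}$ via the natural formulas $m^{*}_{l}(x):=(m_r(x^{*}))^{*}$ and $m^{*}_{r}(x):=(m_l(x^{*}))^{*}$ for $x\in C$. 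This is well-defined precisely thanks to (a): as noted in the proof of Lemma~\ref{123}(iii), $m_l,m_r$ take values in $E_A$, and (a) says $E_A$ is closed under the $C$-involution. The multiplier identities for $m^{*}$ follow from those of $m$ by taking $C$-adjoints and using $(xy)^{*}=y^{*}x^{*}$; for instance $m^{*}_{l}(xy)=(m_r(y^{*}x^{*}))^{*}=(y^{*}m_r(x^{*}))^{*}=m^{*}_{l}(x)\,y$, with the remaining two identities handled analogously.

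The main obstacle is boundedness of $m^{*}_{l},m^{*}_{r}\colon C\to A$, since involution is only isometric in the $C$-norm, which is strictly weaker than the $A$-norm on $E_A$. I plan to settle this by the closed graph theorem applied to involution on the Banach space $(E_A,\|\cdot\|)$: if $y_n\to y$ and $y_n^{*}\to z$ in $A$-norm, both convergences transfer to the weaker $C$-norm, forcing $z=y^{*}$ by continuity of the involution in~$C$. This yields a constant $K>0$ with $\|y^{*}\|\le K\|y\|$ for all $y\in E_A$, whence $\|m^{*}_{l}(x)\|\le K\|m_r\|\,\|x\|_C$ and similarly for $m^{*}_{r}$, so $m^{*}\in M_C(A)$. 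A short computation then confirms
\[
\widetilde{\iota}(m^{*})\,x=\iota(m^{*}_{l}(x))=\iota((m_r(x^{*}))^{*})=\iota(m_r(x^{*}))^{*}=(x^{*}\,\widetilde{\iota}(m))^{*}=\widetilde{\iota}(m)^{*}\,x
\]
for every $x\in C$; faithfulness of $C$ in $M(C)$ then gives $\widetilde{\iota}(m^{*})=\widetilde{\iota}(m)^{*}$, establishing (b).

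For the closing assertion, assume $\iota(A)$ is $*$-closed in $C$. By Lemma~\ref{22}, $C$ is a Segal extension of $A$ with a bounded approximate identity, so Lemma~\ref{120}(ii) yields $\iota(E_A)=\iota(A)\,C=C\,\iota(A)$; taking $C$-adjoints gives $\iota(E_A)^{*}=C\,\iota(A)^{*}=C\,\iota(A)=\iota(E_A)$, whence $E_A$ is self-adjoint.
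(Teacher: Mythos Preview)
Your argument is correct and follows the same route as the paper: (b)$\Rightarrow$(a) and the final assertion come from the factorizations in Proposition~\ref{124}(ii) and Lemma~\ref{120}(ii), while (a)$\Rightarrow$(b) rests on the observation that every $C$-multiplier of $A$ has image in~$E_A$. Your version is simply more explicit---the paper compresses the construction of $m^{*}$ (including the closed-graph step for boundedness of the involution on $E_A$, which the paper has available via Remark~\ref{rem:star-norm-property}) into the phrase ``easily verified''.
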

\begin{proof}
The implication (b) $\Rightarrow$ (a) and the last claim are given by Lemma~\ref{120}(ii) and Proposition~\ref{124}(ii).
The implication (a) $\Rightarrow$ (b) follows from the easily verified fact that the image of every multiplier in $M_C(A)$ is contained in~$E_A$.
\end{proof}

\subsection{Order structure of \CSeg*s}

\noindent
We now turn our attention to the order structure of \CSeg*s.
Let $A$ be a \CSeg* in the \C*~$C$. The \textit{positive cone\/} of $A$ is defined by
\[
A_+ :=A\cap C_+.
\]
Let  $A_h$ denote the real vector space of self-adjoint elements of~$A$. Then $A_h$ becomes a partially ordered vector space
when equipped with the relation
\[
x\le y \quad\mbox{if} \quad y-x\in A_+ \quad (x,y\in A_h).
\]
An element $u\in A_+$ is called an \textit{order unit\/} of $A$ if each $x\in A_h$ satisfies $x\le lu$ for some constant $l>0$.

\begin{example}\label{26}
Let $X$ be a locally compact Hausdorff space, and let $v\colon X\rightarrow\RR$ be a continuous function
such that $v(t)\ge 1$ for every $t\in X$. Define
\[
C_{b}^{v}(X):=\{f\in C(X) : \ vf \mbox{ is bounded on~$X$}\}
\]
and
\[
C_{0}^{v}(X):=\{f\in C(X) : \ vf \mbox{ vanishes at infinity on~$X$}\},
\]
where $C(X)$ denotes the set of all continuous complex-valued functions on~$X$.
Equipped with pointwise operations and the \textit{weighted supremum norm\/}
\[
\| f \|_v := \sup_{t \in X}v(t)|f(t)|,
\]
$C_{b}^{v}(X)$ and $C_{0}^{v}(X)$ are self-adjoint \CSeg*s. In fact, they are examples of the so-called Nachbin algebras; see, e.g., \cite{lN,AK1,AK2}.
It is easy to see that the function~$\frac{1}{v}$ serves as an order unit for $C_{b}^{v}(X)$.
\end{example}

The following standard lemma is recorded for completeness.

\begin{lemma}\label{27}
Let $A$ be a self-adjoint \CSeg* with an order unit. Then
\[
A = A_h + iA_h \ \mbox{ and } \ A_h = A_+ - A_+.
\]
\end{lemma}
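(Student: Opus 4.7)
The plan is to handle the two decompositions separately, using self-adjointness for the first and the order unit for the second. Both mimic the familiar arguments for unital \C*s, with the order unit $u$ playing the role that a unit element would play in the \C* setting.

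For $A = A_h + iA_h$, I would argue directly from the self-adjointness hypothesis. Given $a\in A$, the assumption that $\iota(A)$ is closed under the involution of $C$ means $a^*\in A$, so the standard Cartesian decomposition
\[
a = \tfrac{1}{2}(a+a^*) + i\cdot\tfrac{1}{2i}(a-a^*)
\]
exhibits $a$ as a sum of two elements of $A_h$. The reverse inclusion $A_h+iA_h\subseteq A$ is obvious from $A_h\subseteq A$. No use of the order unit is required for this half.

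For $A_h = A_+ - A_+$, fix $x\in A_h$ and apply the order-unit property of $u$ to both $x$ and $-x$: there exist constants $l_1,l_2>0$ with $x\le l_1 u$ and $-x\le l_2 u$. Setting $L:=\max\{l_1,l_2\}$ gives $-Lu\le x\le Lu$, so $Lu-x$ is self-adjoint and nonnegative in the enveloping \C* $C$; hence $Lu-x\in A\cap C_+=A_+$. Similarly, $Lu\in A_+$ since $u\in A_+$ and $L>0$. The identity $x = Lu - (Lu-x)$ then expresses $x$ as a difference of two elements of $A_+$. The reverse inclusion $A_+-A_+\subseteq A_h$ is immediate because $A_+\subseteq A_h$.

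I do not anticipate any real obstacle here; the only point requiring care is to ensure one invokes the order-unit condition twice (once for $x$, once for $-x$) to produce a two-sided bound $-Lu\le x\le Lu$, and then to observe that membership in $A_+$ follows from $A\cap C_+$ rather than needing any internal positivity structure of $A$.
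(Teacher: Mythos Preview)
Your proof is correct and is precisely the standard argument the paper alludes to; in fact, the paper omits the proof entirely, introducing the lemma with the phrase ``recorded for completeness'' and giving no details. One minor simplification: for $A_h=A_+-A_+$ you do not actually need the two-sided bound---a single application of the order-unit condition to $x$ already gives $x\le lu$, whence $x=lu-(lu-x)$ with both summands in~$A_+$.
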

\begin{remark}\label{rem:star-norm-property}
Let $A$ be a self-adjoint \CSeg*. Since the involution is continuous on $(A,\|\cdot\|)$, by \cite[4.1.15]{cR},
the norm $\|a\|'=\max\{\|a\|,\|a^*\|\}$, $a\in A$ is equivalent to the original norm on~$A$. Thus, replacing
$\|\cdot\|$ by $\|\cdot\|'$, we can, and will henceforth, assume without loss of generality that the
involution on $A$ is an isometry. Under this hypothesis, we have $\|a\|_C\leq\|a\|$ for each $a\in A$,
by \cite[4.1.14]{cR}, which simplifies the subsequent estimates somewhat.
\end{remark}

Every order unit $u\in A$ is \textit{strictly positive}, that is, $\omega(u) > 0$ for every positive functional $\omega\ne0$ on~$A$.
It follows that $u$ is a strictly positive element of the surrounding \C* $C$, which is therefore $\sigma$-unital; that is,
contains a countable contractive approximate identity. E.g., such an approximate identity is given by
$u_n=\bigl(\frac1n+u\bigr)^{-1}u$. An immediate consequence of this is the following observation.

\begin{lemma}\label{210}
Let $A$ be a self-adjoint \CSeg* with order unit~$u$. Then, for each $c\in C$, one has $uc=0$ if and only if $c=0$.
\end{lemma}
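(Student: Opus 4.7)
The ``if'' direction is of course trivial, so I focus on showing that $uc=0$ forces $c=0$. The main ingredient is already highlighted in the paragraph preceding the lemma: as an order unit of $A$, the element $u$ is strictly positive in $C$, and the net $u_n=(1/n+u)^{-1}u$ (equivalently, the sequence $(u^{1/n})$) is a (countable, contractive) approximate identity for $C$. So the plan is to show that $uc=0$ implies $u_nc=0$ for every~$n$, and then pass to the limit.

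The bridge from $uc=0$ to $u_nc=0$ is continuous functional calculus for the positive element $u\in C$. First, a direct induction gives $u^kc = u^{k-1}(uc)=0$ for every $k\ge 1$, hence $p(u)c=0$ for every polynomial $p$ with $p(0)=0$. Now the function $f_n(t):=t/(1/n+t)$ is continuous on $[0,\infty)$ and vanishes at the origin, so by Weierstrass approximation on the compact spectrum $\sigma(u)$ it is a uniform limit of such polynomials. Since the functional calculus $g\mapsto g(u)$ is isometric on $C_0(\sigma(u)\setminus\{0\})$ and lands inside $C$, we obtain $u_n c = f_n(u)c = 0$ for every $n\in\NN$. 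Letting $n\to\infty$ and using that $(u_n)$ is an approximate identity for $C$ yields $c=\lim_n u_nc=0$, as required.

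There is no real obstacle here: the only mild subtlety is that $C$ need not be unital, so one must take care that the resolvent-type element $(1/n+u)^{-1}u$ is interpreted through the non-unital continuous functional calculus applied to the function $f_n$, which vanishes at $0$ and is therefore genuinely an element of~$C$. Everything else is standard once the author's observation that $u$ is strictly positive in $C$ is invoked.
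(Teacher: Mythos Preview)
Your argument is correct and is exactly the spelling-out of what the paper intends: the paper does not give a formal proof but merely calls the lemma ``an immediate consequence'' of the fact that $u_n=(\tfrac1n+u)^{-1}u$ is an approximate identity for~$C$. Your use of the functional calculus to pass from $uc=0$ to $u_nc=0$, followed by letting $n\to\infty$, is precisely the intended route, and your remark on interpreting $u_n$ via the non-unital functional calculus is an apt clarification.
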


The following special \CSeg*s are in the focus of our attention.

\begin{definition}\label{29}
By an \textit{order unit \CSeg*\/} we mean a pair $(A,u)$, where $A$ is a self-adjoint \CSeg* and
$u$ is an order unit of $A$ satisfying
\[
\|a\|=\inf\{l>0 : \ -lu\le a\le lu\}
\]
for all $a\in A_h$.
\end{definition}

We now obtain a characterization of order unit \CSeg*s.
In the following, $1$ will denote the identity element of~$M(C)$.

\begin{theorem}\label{211}
Let $A$ be a \CSeg* in the \C* $C$, and let $u\in A_+$ be strictly positive. Put $v=u^\frac{1}{2}\in C_+$.
Then the following conditions are equivalent:
\begin{enumerate}\itemsep0pt
\item[\rm{(a)}] $(A,u)$ is an order unit \CSeg*;
\item[\rm{(b)}] there exists a self-adjoint $C$-subbimodule $D$ of $M(C)$ containing~$C$ and~$1$
                                such that $A=vDv$ and $\|vdv\|=\|d\|_C$ for all $d\in D_h$.
\end{enumerate}
In particular, $E_A=vCv$ and $M_C(A)=vM(C)v$ whenever $(A,u)$ is an order unit \CSeg*.
\end{theorem}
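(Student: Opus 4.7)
The plan is to define $D := \{d \in M(C) : vdv \in A\}$ in the direction (a)$\Rightarrow$(b) and verify all stated conditions sequentially. Throughout, the strict positivity of $v = u^{1/2}$, inherited from that of $u$, ensures that left and right multiplication by $v$ on $C^{**}$ are injective; checked in a faithful nondegenerate representation, this also yields $vxv \ge 0 \Leftrightarrow x \ge 0$ for $x \in M(C)_h$, from which the norm identity
\[
\|vdv\|_A \;=\; \inf\{l : -lu \le vdv \le lu\} \;=\; \inf\{l : -l \le d \le l\} \;=\; \|d\|_C
\]
for $d \in D_h$ follows formally.

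To show $vCv \subseteq A$, given $c \in C_h$ with $-\|c\|_C \le c \le \|c\|_C$, we get $-\|c\|_C u \le vcv \le \|c\|_C u$. Approximating $c$ by $a_n \in A_h$ in $\|\cdot\|_C$ (possible by self-adjointness and density of $A$), the products $va_nv$ lie in $A$ by two applications of the ideal property, and the order estimate $\|v(a_n-a_m)v\|_A = \|v(a_n-a_m)v\|_u \le \|a_n-a_m\|_C$ makes $(va_nv)$ Cauchy in $A$-norm with limit $vcv$. Thus $C \subseteq D$; self-adjointness of $D$ and $1 \in D$ (via $v \cdot 1 \cdot v = u$) are clear, and the $C$-bimodule property follows because $C$ is a two-sided ideal of $M(C)$, so $cd, dc \in C$ and $v(cd)v, v(dc)v \in vCv \subseteq A$.

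The main obstacle is $A \subseteq vDv$. For $a \in A_h$ with $\|a\|_A = l$, the bound $-lv^2 \le a \le lv^2$ combined with a Douglas-type factorization in a faithful nondegenerate representation produces a self-adjoint $d \in C^{**}$ with $\|d\| \le l$ and $vdv = a$; the difficulty is to place $d$ in $M(C)$ rather than merely in $C^{**}$. My approach is to realize $d$ as a strict limit in $M(C)$ of the explicit sequence
\[
d_n := b_n a b_n, \qquad b_n := (n^{-1}+u)^{-1/2} \in M(C).
\]
A direct computation using $b_n v^2 b_n = u(n^{-1}+u)^{-1} \le 1$ gives $-l \le d_n \le l$ in $M(C)$, and $vd_nv = h_n a h_n \to a$ in $\|\cdot\|_C$, where $h_n := vb_n$ is the standard contractive approximate identity of $C$. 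The subtle step is verifying that $(d_n c)$ and $(c d_n)$ are $\|\cdot\|_C$-Cauchy for every $c \in C$, exploiting the order bound on $a$ together with the spectral form of $b_n$; this yields a strict limit $d \in M(C)$ with $vdv = a$.

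The direction (b)$\Rightarrow$(a) is immediate from the displayed norm identity: $A^* = vD^*v = A$, and $\|vdv\|_A = \inf\{l : -lu \le vdv \le lu\}$ shows $u$ is an order unit with $\|\cdot\|_A$ the associated order unit norm. For the ``in particular'' clause, Lemma~\ref{22} gives $\tilde{A}_M = C$, hence $E_A = AC$. Applying Cohen--Hewitt factorization to $E_A$ (Proposition~\ref{113}(iii)), every $y \in E_A$ factors as $(vd_1v)(vd_2v) = v(d_1 u d_2)v$ with $d_1 u d_2 \in C$ (since $C$ is a two-sided ideal of $M(C)$), giving $E_A \subseteq vCv$; the reverse inclusion follows from Proposition~\ref{113}(iv) once one exhibits an $A$-norm approximate identity of $vCv$ such as $v \cdot u(n^{-1}+u^2)^{-1} \cdot v$. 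The identification $M_C(A) = vM(C)v$ is obtained by the same factorization arguments applied to components of $C$-multiplier pairs.
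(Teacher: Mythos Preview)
Your overall plan and the direction (b)$\Rightarrow$(a) are fine, and your argument for $vCv\subseteq A$ works (in fact it gives $vCv\subseteq E_A$, since $va_nv\in AC=E_A$). The genuine gap is the ``subtle step'' you flag: strict convergence of $d_n=b_na b_n$ in $M(C)$. The sequence $b_n=(n^{-1}+u)^{-1/2}$ is unbounded in $M(C)$, so in
\[
d_nc-d_mc=b_na(b_n-b_m)c+(b_n-b_m)ab_mc
\]
neither $b_na$ nor $ab_mc$ is controlled, and I do not see how to prove convergence from the order bound $-lu\le a\le lu$ alone. If one \emph{already} knew $a=vdv$ with $d\in M(C)$, then $d_n=h_ndh_n$ and strict convergence follows from $h_n\to 1$ strictly; but that is circular. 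Douglas factorization only puts $d$ in $C^{**}$, and getting it into $M(C)$ is precisely the content of the theorem. Your arguments for the ``in particular'' clause share the same defect: from $y=(vd_1v)c$ you obtain only $y\in vC$, and $vC\cap Cv\neq vCv$ in general; moreover Cohen--Hewitt does not factor $E_A=A\cdot A$ since the approximate identity of $E_A$ is unbounded.

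The paper circumvents all of this by reversing the order. It first proves $E_A=vCv$ directly: the inclusion $vCv\subseteq E_A$ as above, then density of $vA^2v$ in $E_A$ together with the norm equivalence $\|vcv\|\sim\|c\|_C$ (from Lemma~\ref{222}) to close up. The decisive observation comes next: since $cu\in E_A=vCv$ for every $c\in C$, one has $cv\cdot v=vc'v$ for a unique $c'\in C$, whence $cv=vc'$ and $vC=Cv$; the map $c\mapsto c'$ is an automorphism of $C$. This commutation relation is what you are missing. With it in hand, $M_C(A)=vM(C)v$ follows by a direct multiplier computation (extending the automorphism to $M(C)$), and then $A=vDv$ with $D=\{m\in M(C):vmv\in A\}$ is immediate from the sandwich $E_A\subseteq A\subseteq M_C(A)$. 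So rather than attacking $A\subseteq vM(C)v$ head-on, the paper obtains it for free once $vC=Cv$ is established.
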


For the proof, we need the following result.

\begin{lemma}\label{222}
With the assumptions and notation as in Theorem~\textup{\ref{211}}, let $m\in M(C)_h$. Then, for $l>0$,
\begin{equation}\label{eq:order-norms}
-lu\le vmv\le lu\quad{}\Longleftrightarrow{}\quad -l1\le m\le l1.
\end{equation}
In particular, $\|m\|_C=\inf\{l>0 : \ -lu\le vmv\le lu\}$ for all $m\in M(C)_h$ and thus, if $(A,u)$ is an order unit \CSeg*,
$\|a\|_C=\|vav\|$ for all $a\in A_h$.
\end{lemma}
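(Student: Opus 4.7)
The plan is to establish the equivalence \eqref{eq:order-norms} first, and then read off the norm identities from the order-unit description of the $C^*$-norm on the self-adjoint part of a unital $C^*$-algebra.

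The implication from right to left is the easy one: if $-l1\le m\le l1$ in $M(C)_h$, then $l1\pm m\in M(C)_+$, and conjugating by the positive element $v\in C$ preserves positivity, so $v(l1\pm m)v=lu\pm vmv\in C_+$, i.e.\ $-lu\le vmv\le lu$. Nothing beyond the identity $u=v^2$ and standard stability of the positive cone under $*$-conjugation is used here.

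For the converse, suppose $-lu\le vmv\le lu$; equivalently $v(l1\pm m)v\in C_+$. Setting $a:=l1\pm m\in M(C)_h$, I want to deduce $a\ge0$ in $M(C)$. For any $c\in C$, $(vc)^*a(vc)=c^*(vav)c\in C_+$. Because $u$ is strictly positive in~$C$, the set $uC=v(vC)$ is dense in $C$, so $vC$ is dense in $C$; continuity of the map $b\mapsto b^*ab$ from $C$ into itself therefore yields $b^*ab\in C_+$ for every $b\in C$. I will then invoke the standard fact that an element $a\in M(C)_h$ is positive if and only if $b^*ab\in C_+$ for every $b\in C$. (If needed this can be proved in one line by extending a faithful non-degenerate representation $\pi$ of $C$ to $M(C)$ and using that $\pi(C)H$ is dense in $H$, so that $\langle\pi(a)\pi(b)\eta,\pi(b)\eta\rangle=\langle\pi(b^*ab)\eta,\eta\rangle\ge0$ for all $\eta\in H$ and $b\in C$ forces $\pi(a)\ge0$.) This step, bridging from compressions by $C$ to positivity in $M(C)$, is the only non-routine ingredient; once it is granted, \eqref{eq:order-norms} follows by applying the argument to both choices of sign.

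With \eqref{eq:order-norms} in hand, the rest is immediate. Since $1$ is the identity of the unital $C^*$-algebra $M(C)$, the order-unit description of the $C^*$-norm gives $\|m\|_C=\inf\{l>0:-l1\le m\le l1\}$ for every $m\in M(C)_h$, which by \eqref{eq:order-norms} coincides with $\inf\{l>0:-lu\le vmv\le lu\}$. Finally, if $(A,u)$ is an order unit \CSeg*\ and $a\in A_h$, then $vav\in A_h$ (since $v\in C$ and $A$ is an ideal of $C$), so Definition~\ref{29} gives $\|vav\|=\inf\{l>0:-lu\le vav\le lu\}$; applying \eqref{eq:order-norms} with $m=a\in M(C)_h$ identifies this with $\|a\|_C$.
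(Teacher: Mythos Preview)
Your argument is correct and complete. Both you and the paper open the nontrivial implication in \eqref{eq:order-norms} the same way: conjugate the order inequality by $c\in C$, then use that $vC$ is dense in~$C$ (because $v=u^{1/2}$ is strictly positive) to pass from compressions by $vc$ to compressions by arbitrary $c\in C$. Where you diverge is in the final step. You invoke the characterisation ``$a\in M(C)_h$ is positive iff $b^*ab\ge0$ for all $b\in C$'', justified via a faithful non-degenerate representation; this gets $l1\pm m\ge0$ in one stroke. The paper instead turns the order inequality into the norm estimate $\|c^*mc\|_C\le l\,\|c^*c\|_C$ for all $c\in C$ and then shows this forces $\|m\|_C\le l$ by a two-stage bootstrap: first for $m\ge0$ using $\|m^{1/2}c\|_C^2=\|c^*mc\|_C$, then for general $m\in M(C)_h$ by applying the positive case to $n=\|m\|_C\,1\pm m$. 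Your route is shorter and more conceptual, at the price of appealing to representation theory; the paper's route stays entirely within elementary $C^*$-norm manipulations and avoids any external input beyond the density of $vC$. Either is perfectly adequate here.
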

\begin{proof}
Let $m\in M(C)_h$ and $l>0$ be such that $-lu\le vmv\le lu$.
Then each $c\in C$ satisfies
\[
-lc^*uc\le c^*vmvc\le lc^*uc,
\]
so that
\[
\|(vc)^*m(vc)\|_C\le l\,\|(vc)^*(vc)\|_C.
\]
As $v$ is strictly positive, $vC$ is dense in~$C$, see, for example, \cite[15.4.4]{WO},
and hence $\|c^*mc\|_C\le l\,\|c^*c\|_C$ for all $c\in C$.
This implies that $\|m\|_C\le l$ which is the right-hand side in~\eqref{eq:order-norms} above.

To prove the first part of the last assertion, first suppose that $m\geq0$. Then
\begin{equation*}
\|m^\frac12 c\|_C^2=\|c^*mc\|_C\leq l\,\|c\|_C^2
\end{equation*}
which directly yields the claim. For arbitrary $m\in M(C)_h$, put $n=\|m\|_C\,1\pm m$. Since
\begin{equation*}
\|c^*nc\|_C=\bigl\|\|m\|_C\,c^*c\pm c^*mc\bigr\|_C\leq\bigl(\|m\|_C+l\bigr)\|c^*c\|_C
\end{equation*}
we obtain $\|n\|_C\leq \|m\|_C+l$ by the above. It follows that
\[
0\leq \|m\|_C\,1\pm m\leq \|m\|_C\,1+l\,1
\]
and thus $\|m\|_C\le l$.

Since the right-hand side in~\eqref{eq:order-norms} evidently implies the left-hand side, we find that
\begin{equation*}
\inf\{l>0 : \ -lu\le vmv\le lu\}=\|m\|_C\qquad(m\in M(C)_h);
\end{equation*}
specialising this to $a\in A_h$ and using the definition of the order unit norm we obtain the final assertion.
\end{proof}

\begin{proof}[Proof of Theorem~\textup{\ref{211}}]
\rm{(a)}${}\Rightarrow{}$\rm{(b)}
Clearly, $vCv$ is a self-adjoint subalgebra of~$C$. By Lemma~\ref{210},
we can define a complete *-algebra norm $\|\cdot\|_v\colon vCv\rightarrow\RR$ by setting
$\|vcv\|_v:=\|c\|_C$ for $c\in C$. The remainder of the proof is divided into seven steps.

\smallskip
Step 1. \textit{$vCv\subseteq E_A$}: Let $c\in C_h$ and set
\[
u_n:=\bigl(\frac1n+u\bigr)^{-1}u\in C_+ \ \mbox{ and } \ x_n:=vu_ncu_nv\in E_A \quad (n\in\NN).
\]
Since $(u_n)$ is a contractive approximate identity for $C$, we can apply Lemma~\ref{222} to conclude that,
for all $n,m\in\NN$,
\begin{equation*}
\begin{split}
\|x_n-x_m\|     &=\|vu_ncu_nv-vu_mcu_mv\|=\|v(u_ncu_n-u_mcu_m)v\|\\
                            &=\|u_ncu_n-u_mcu_m\|_C\le\|u_ncu_n-c\|_C+\|c-u_mcu_m\|_C\rightarrow 0,
\end{split}
\end{equation*}
whence $(x_n)$ is a Cauchy sequence in~$E_A$. Since $E_A$ is a closed ideal of $A$,
there exists $x\in E_A$ such that $\|x_n-x\|\rightarrow 0$. It follows that
\begin{multline*}
\|vcv-x\|_C\le\|vcv-x_n\|_C+\|x_n-x\|_C\le\|u\|_C\,\|c-u_ncu_n\|_C+\|x_n-x\|\rightarrow 0
\end{multline*}
(see Remark~\ref{rem:star-norm-property}). Therefore, $vcv=x$ and so $vC_hv\subseteq E_A$.
By the identity $C=C_h+iC_h$, this yields the desired inclusion.

\smallskip
Step 2. \textit{$vCv$ is dense in~$E_A$}: It is sufficient to show that $vA^2v$ is dense in~$E_A$.
Since $vC$ and $Cv$ are dense in~$C$, so are $vA$ and $Av$, by the density of~$A$ in~$C$.
Let $a,b\in A$ and~$\eps>0$. Then we find $a',b'\in A$ with
\[
\|a-va'\|_C<\frac{\eps}{2\,l\,\|b\|}\quad \mbox{ and } \quad \|b-b'v\|_C<\frac{\eps}{2\,l\,\|va'\|},
\]
where the constant $l>0$ is as in Lemma~\ref{15}. It follows that
\begin{equation*}
\begin{split}
\|ab-va'b'v\| &\le \|ab-va'b\|+\|va'b-va'b'v\|\\
                            &\le l\,\|b\|\,\|a-va'\|_C+l\,\|va'\|\,\|b-b'v\|_C  <\frac{\eps}{2}+\frac{\eps}{2}=\eps,
\end{split}
\end{equation*}
which, together with the density of $A^2$ in $E_A$, see Lemma~\ref{120}(i), proves the claim.

\smallskip
Step 3. \textit{$vCv=E_A$}: In view of the above, it is enough to show that $\|\cdot\|$ and $\|\cdot\|_v$ are equivalent on~$vCv$.
By Step~1 and Lemma~\ref{222}, the two norms agree on the self-adjoint part as
\begin{equation*}
\|vcv\|_v=\|c\|_C=\|vcv\|\qquad(c\in C_h).
\end{equation*}
The cartesian decomposition of $c\in C$ into its real and imaginary parts thus immediately yields
$\|vcv\|\le 2\,\|vcv\|_v\le 4\,\|vcv\|$ because both norms are *-norms.

\smallskip
Step 4. \textit{Each $c\in C$ determines unique $c',c''\in C$ with $cv=vc'$ and $vc=c''v$}:
Let $c\in C$. Then $cu,uc\in E_A$, so that $cu=vc'v$ and $uc=vc''v$ for some $c',c''\in C$,
by Step~3. An application of Lemma~\ref{210} yields the uniqueness of $c'$ and $c''$ as well as the desired identities.
For example,
\begin{equation*}
cu=vc'v\quad{}\Longrightarrow{}\quad (cv-vc')v=0\quad{}\Longrightarrow{}\quad (cv-vc')u=0.
\end{equation*}
The uniqueness of the elements $c'$ and $c''$ yields
\begin{multline*}
cv=vc'=(c')''v\ \;\Longrightarrow{}\ \; c=(c')''\text{ \ and \ }
vc=c''v=v(c'')'\ \;{}\Longrightarrow{}\ \; c=(c'')'
\end{multline*}
so that the mappings $c\mapsto c'$ and $c\mapsto c''$ are inverses to each other and define algebra automorphisms on~$C$.
Note moreover that $Cv=vC$ and $Cu=uC$ since $c''u=vcv=uc'$.

\smallskip
Step 5. $M_C(A)=vM(C)v$: Let $m\in M_C(A)$ and recall that its image is contained in~$E_A$.
By Steps~3 and~4 together with Lemma~\ref{210}, one can thus define a pair $s:=(s_l,s_r)$ of linear mappings on $C$ by the formulae
\[
m_l(c')=vs_l(c)v \ \mbox{ and } \ m_r(c'')=vs_r(c)v \quad (c\in C).
\]
To see that $s$ is a multiplier of~$C$, take $c_1,c_2\in C$. Then
\begin{equation*}
\begin{split}
vs_l(c_1c_2)v &= m_l((c_1c_2)')=m_l(c_1'c_2')=m_l(c_1')c_2'\\
                            &=vs_l(c_1)vc_2'=vs_l(c_1)c_2v
\end{split}
\end{equation*}
and
\begin{equation*}
\begin{split}
vs_r(c_1c_2)v &= m_r((c_1c_2)'')=m_r(c_1''c_2'')=c_1''m_r(c_2'')\\
                            &=c_1''vs_r(c_2)v=vc_1s_r(c_2)v,
\end{split}
\end{equation*}
so that $s_l(c_1c_2)=s_l(c_1)c_2$ and $s_r(c_1c_2)=c_1s_r(c_2)$. Moreover,
\begin{equation*}
\begin{split}
vc_1s_l(c_2)v &= c_1''vs_l(c_2)v=c_1''m_l(c_2')\\
                            &=m_r(c_1'')c_2'=vs_r(c_1)vc_2'\\
                            &=vs_r(c_1)c_2v,
\end{split}
\end{equation*}
and therefore $c_1s_l(c_2)=s_r(c_1)c_2$. Consequently, $s\in M(C)$ and $m=vsv$. Indeed, for all $c\in C$,
\[
m_l(c)v=m_l(v)c'=vs_l(v)vc'=vs_l(vc)v
\]
and
\[
vm_r(c)=c''m_r(v)=c''vs_r(v)v=vs_r(cv)v,
\]
because $v=v'=v''$. Thus,
$m_l=l_{v}\circ s_l\circ l_{v}$ and $m_r=r_{v}\circ s_r\circ r_{v}$,
that is, $m=vsv$. This concludes the proof that $M_C(A)\subseteq vM(C)v$.
The reverse inclusion is evident from Steps~3 and~4; thus the identity follows.

\smallskip
Step 6. \textit{$A=vDv$ for some self-adjoint $C$-subbimodule\/ $D$ of $M(C)$ containing\/~$C$ and~$1$}:
Putting
\[
D:=\{m\in M(C) : \ vmv\in A\},
\]
the statement is clear from Steps~3 and~5 together with the inclusions $E_A\subseteq A\subseteq M_C(A)$.

\smallskip
Step 7. \textit{$\|vdv\|=\|d\|_C$ for all $d\in D_h$}: This is a special case of Lemma~\ref{222}.

\medskip\noindent
\rm{(b)}${}\Rightarrow{}$\rm{(a)} Clearly, $A$ is self-adjoint.
To show that $u$ is an order unit of $A$, let $a\in A_h$. Then $a=vdv$ for some $d\in D_h$.
Since
\[
-\|d\|_C\,1\le d\le \|d\|_C\,1,
\]
it follows that
\[
-\|d\|_C\,u\le vdv\le \|d\|_C\,u,
\]
as wanted. The order unit norm property of $\|\cdot\|$ is immediate from the identities
\begin{equation*}
\begin{split}
\|a\|=\|vdv\|=\|d\|_C&=\inf\{l>0 : \ -l1\le d\le l1\}\\
                     &=\inf\{l>0 : \ -lu\le vdv\le lu\}\\
                     &=\inf\{l>0 : \ -lu\le a\le lu\},
\end{split}
\end{equation*}
where we have employed the equivalence~\eqref{eq:order-norms} in Lemma~\ref{222}.
As a result, $(A,u)$ is an order unit \CSeg*.
\end{proof}

The form $A=vDv$ for a $C$-subbimodule $D$ of $M(C)$ which the \CSeg* $A$ takes in Theorem~\ref{211} above
is not the natural way one would expect an ideal of $C$ to appear. However, there is a commutation relation hidden
in its proof, which we make explicit now.

\begin{corollary}\label{cor:order-unit-cstar-segal}
Let $(A,u)$ be an order unit \CSeg* in the \C* $C$ and let $v=u^{\frac12}$. Then $A=vDv$
for a self-adjoint $C$-subbimodule $D$ of $M(C)$ and $vC=Cv$. Moreover, $E_A=uC=Cu$ and $M_C(A)=uM(C)=M(C)u$.
\end{corollary}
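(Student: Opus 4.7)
The plan is to extract the commutation relation $Cv=vC$ (already established as part of Step~4 in the proof of Theorem~\ref{211}) and to lift it from $C$ to the multiplier algebra $M(C)$. All the identities asserted by the corollary will then follow immediately from the identifications $E_A=vCv$ and $M_C(A)=vM(C)v$ supplied by Theorem~\ref{211}.

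First, $Cv=vC$ is recorded directly from Step~4. Using it, $E_A=vCv=v(vC)=uC$ and, symmetrically, $E_A=(Cv)v=Cu$, so the $E_A$ part of the statement needs no further argument.

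The main new step is to prove $vM(C)=M(C)v$. Given $m\in M(C)$, the goal is to construct a double centralizer $s=(s_l,s_r)$ of $C$ with $vs=mv$ in $M(C)$. Writing $\sigma\colon C\to C$ for the algebra automorphism of Step~4 (so that $cv=v\sigma(c)$ and $vc=\sigma^{-1}(c)v$), I would define
\begin{equation*}
vs_l(c)=mvc,\qquad vs_r(c)=\sigma^{-1}(c)mv\qquad(c\in C).
\end{equation*}
Both right-hand sides lie in $Cv=vC$ (because $mvc=m\sigma^{-1}(c)v$ and $\sigma^{-1}(c)m\in C$), so $s_l(c)$ and $s_r(c)$ exist and are unique by injectivity of left multiplication by $v$ on $C$; this injectivity follows from Lemma~\ref{210} via the brief adjoint argument $vc=0\Rightarrow c^*v=0\Rightarrow c^*u=0\Rightarrow uc=0\Rightarrow c=0$. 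Boundedness of $s_l$ and $s_r$ is then a closed-graph argument, and the automorphism property of $\sigma$ makes the multiplier relations and the double-centralizer identity $c_1s_l(c_2)=s_r(c_1)c_2$ routine. The equality $vs=mv$ in $M(C)$ splits into matching the left actions (by the definition of $s_l$) and matching the right actions, which reduces to $s_r(cv)=cmv$; this last identity follows from $\sigma^{-1}(cv)=\sigma^{-1}(c)v$ together with the already-established injectivity. The reverse inclusion $vM(C)\subseteq M(C)v$ is obtained by a symmetric argument.

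Once $vM(C)=M(C)v$ is in hand, $M_C(A)=vM(C)v=v(vM(C))=uM(C)$ and $M_C(A)=(M(C)v)v=M(C)u$, completing the proof. The main obstacle is organizational rather than conceptual: the twisted definition of $s_r$ via $\sigma^{-1}$ is the only non-obvious ingredient, while all remaining verifications are bookkeeping with the commutation relation supplied by Step~4.
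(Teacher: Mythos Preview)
Your proposal is correct and follows essentially the same route as the paper. Both arguments lift the commutation relation from $C$ to $M(C)$; the paper phrases this as extending the automorphisms $c\mapsto c'$ and $c\mapsto c''$ of Step~4 to $M(C)$ via $n'c'=(nc)'$ etc.\ and then reading off $nv=vn'$, $vn=n''v$, whereas you construct the multiplier $s$ with $mv=vs$ directly---but your $s$ is precisely the paper's $m'$, so the two arguments coincide up to packaging.
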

\begin{proof}
We shall use the notation of Step~4 in the proof of Theorem~\ref{211}.
It was shown there that $vC=Cv$ (which explains why $A$ appears as an ideal in~$C$).
It follows immediately that $E_A=vCv=Cu=uC$.
To prove the final assertion we extend the automorphism $c\mapsto c'$ and its inverse $c\mapsto c''$ from $C$
to $M(C)$ via $n'c'=(nc)'$, $c'n'=(cn)'$, $n''c''=(nc)''$ and $c''n''=(cn)''$ for $n\in M(C)$.
For each $c\in C$, we have
\begin{equation*}
vnc=(nc)''v=n''c''v=n''vc\quad\text{and}\quad nvc'=ncv=v(nc)'=vn'c'
\end{equation*}
and thus $vn=n''v$ and $nv=vn'$, that is, the identities from Step~4 extend to $M(C)$.
As a result, $un'=vnv=n''u$ so that $uM(C)=vM(C)v=M(C)u$ as claimed.
\end{proof}

It follows from the above results that, whenever $(A,u)$ is an order unit \CSeg* in the \C*~$C$,
$M_C(A)=uM(C)\subseteq C$ and thus $C$ is a Segal extension of $M_C(A)$.

\begin{example}\label{exam:eanota}
This example shows that an order unit \CSeg* $A$ is strictly larger than its approximate ideal $E_A$ unless
$A$ itself is a unital \C*. By Theorem~\ref{211} (b), if $(A,u)$ is an order unit \CSeg* in the \C* $C$, then
$E_A=vCv$ and $A=vDv$ for a subbimodule $D$ of $M(C)$ which contains the identity $1$ of $M(C)$.
Suppose $E_A=A$. Then $vCv=vDv$ which entails that $C=D$ by Lemma~\ref{210} and $C$ is unital.
The density of $A$ in $C$ entails that $A$ contains an invertible element, hence $1$ itself so that $A=C$.
To be concrete, this applies to any closed subalgebra $C_0^v(\RR)\subseteq A\subseteq C_b^v(\RR)$,
where $v(t)=1+t^2$, $t\in\RR$ because $C_b^v(\RR)\,C_0(\RR)\subseteq C_0^v(\RR)$ (as $C_b^v(\RR)$ is a \CSeg*
in $C_0(\RR)$) and $C_0^v(\RR)$ does not have an order unit.
\end{example}

\subsection{Weighted \C*s}

\noindent
We now introduce a class of \CSeg*s that provide the noncommutative analogue of the Nachbin algebras discussed in
Example~\ref{26} above.

\begin{definition}\label{212}
By a \textit{weighted \C*\/} we mean a pair $(A,\pi)$, where
\begin{enumerate}[(i)]\itemsep0pt
\item $A$ is a self-adjoint \CSeg* in the \C* $C$;
\item $\pi\colon A\rightarrow M(C)$ is a positive isometric $C$-bimodule homomorphism.
\end{enumerate}
\end{definition}

\begin{remark}
The order structure on the multiplier module $M_C(A)$ is defined such that the positive cone $M_C(A)_+$
agrees with $M_C(A)\cap M(C)_+$.
\end{remark}

The link between Nachbin algebras and commutative weighted \C*s is given by the result below, proved in~\cite{AK4}.

\begin{proposition}\label{213}
Let $(A,\pi)$ be a commutative weighted \C*. Then $A$
is isometrically *-isomorphic to a closed self-adjoint subalgebra of\/ $C_{b}^{w}(Y)$ for a locally compact Hausdorff space~$Y$
and a continuous real-valued function $w$ on $Y$ with $w(t)\ge 1$ for all $t\in Y$.
In particular, up to an isometric *-isomorphism, $E_A=C_{0}^{w}(Y)$ and $M_C(A)=C_{b}^{w}(Y)$.
\end{proposition}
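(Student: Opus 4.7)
My plan is to apply Gelfand theory to~$C$, identifying it with $C_0(Y)$ for its locally compact Hausdorff character space~$Y$ (so $M(C)\cong C_b(Y)$), and then to extract the weight~$w$ from the interplay between the two $C$-bimodule maps $A\to M(C)$: the composition $\iota\colon A\hookrightarrow C\hookrightarrow M(C)$ from the \CSeg* structure, and the given map~$\pi$. Under these identifications each $a\in A$ becomes a function $\iota(a)\in C_0(Y)$ and simultaneously $\pi(a)\in C_b(Y)$.

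The key construction is that these two functions differ only by a universal weight. Since $\pi$ is a $C$-bimodule homomorphism, $\iota(a)\,\pi(b)=\pi(ab)=\pi(a)\,\iota(b)$ in $C_b(Y)$ for all $a,b\in A$, so the pointwise ratio $\pi(a)(t)/\iota(a)(t)$ is independent of~$a$ wherever $\iota(a)(t)\ne 0$; using $aa^*\in A_+$ with $\iota(aa^*)(t)=|\iota(a)(t)|^2$ together with density of $\iota(A)$ in $C_0(Y)$, such an $a\in A_+$ exists at every $t\in Y$. Define $w\colon Y\to[0,\infty)$ by this common ratio. The local quotient representation makes~$w$ continuous, and positivity of~$\pi$ on $A_+$ makes~$w$ nonnegative. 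To prove $w\ge 1$, I argue by contradiction: if $w(t_0)<1$, pick a neighbourhood~$U$ of~$t_0$ with $\sup_U w<1$ and, exploiting that $\iota(A)$ is an ideal of $C=C_0(Y)$, take $a:=cb$ with $b\in A_+$ satisfying $\iota(b)(t_0)\ne 0$ and $c\in C_c(Y)$ a contractive bump supported in~$U$ with $c(t_0)=1$. Then
\[
\|a\|_A = \|\pi(a)\|_\infty = \sup_{t\in U}w(t)|\iota(a)(t)|\le\bigl(\sup_U w\bigr)\|\iota(a)\|_\infty<\|\iota(a)\|_C,
\]
contradicting $\|\iota(a)\|_C\le\|a\|_A$ from Remark~\ref{rem:star-norm-property}.

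With $w\ge 1$ in hand, the map $\Phi\colon A\to C_b^w(Y)$, $\Phi(a):=\iota(a)$, is a *-homomorphism landing in $C_b^w(Y)$ because $w\,\iota(a)=\pi(a)\in C_b(Y)$; it is isometric since $\|\iota(a)\|_w=\|w\,\iota(a)\|_\infty=\|\pi(a)\|_\infty=\|a\|_A$, and has closed image by completeness of~$A$. For the final assertion, I compute $\pi(E_A)=\pi(A)\cdot C$ inside $C_0(Y)$: this is a closed ideal of $C_0(Y)$ (closed because $\pi$ is isometric and $E_A$ is closed in~$A$) whose hull is empty (since $w\ge 1$ and $\iota(A)$ is dense in $C_0(Y)$), hence equals all of $C_0(Y)$; dividing by~$w$ yields $\Phi(E_A)=C_0^w(Y)$. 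A $C$-multiplier $m$ of~$A$ then corresponds to multiplication by a unique $f\in C(Y)$ with $f\cdot C_0(Y)\subseteq\Phi(A)\subseteq C_b^w(Y)$, forcing $f\in C_b^w(Y)$, and conversely each $f\in C_b^w(Y)$ defines such an~$m$ via $f\cdot C_0(Y)\subseteq C_0^w(Y)=\Phi(E_A)\subseteq\Phi(A)$, so $M_C(A)\cong C_b^w(Y)$. The main obstacle I anticipate is the empty-hull argument establishing $\pi(E_A)=C_0(Y)$: one must combine the closedness of $\pi(E_A)$ in the sup-norm (via the isometry of~$\pi$) with the positivity $w\ge 1$ to rule out any common zero of $\pi(A)$ on~$Y$.
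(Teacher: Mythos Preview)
The paper does not actually give a proof of this proposition; it is stated as ``the result below, proved in~\cite{AK4}'' and simply quoted. Thus there is no in-text argument to compare against, and your task reduces to producing a self-contained proof. Your proposal does this correctly.

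A couple of remarks on points that are terse in your write-up. For the inequality $\|\iota(a)\|_C\le\|a\|_A$ you invoke Remark~\ref{rem:star-norm-property}; this is legitimate because once you have shown that $w$ is real-valued (from positivity of~$\pi$), it follows that $\pi$ is $*$-preserving and hence, by the isometry of~$\pi$, that the involution on~$A$ is already isometric in the given norm---so the standing hypothesis of Remark~\ref{rem:star-norm-property} holds without renorming. For the identification of $M_C(A)$, note that the image of every $m\in M_C(A)$ lies in~$E_A$ (cf.\ the proof of Proposition~\ref{28}), so in fact $f\cdot C_0(Y)\subseteq\Phi(E_A)=C_0^w(Y)$, which gives $wf\in M(C_0(Y))=C_b(Y)$ directly and also makes the isometry $\|m\|=\|f\|_w$ immediate.

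It is worth observing that the paper's subsequent Theorem~\ref{214} provides an alternative, structure-theoretic route: once one knows $E_A=uC$ and $M_C(A)=uM(C)$ for a strictly positive $u\in Z(M(C))_+=C_b(Y)_+$ with $\|uc\|_A=\|c\|_C$, setting $w:=1/u$ immediately yields $E_A\cong C_0^w(Y)$ and $M_C(A)\cong C_b^w(Y)$. Your approach is more elementary and direct, extracting~$w$ pointwise via Gelfand theory; the Theorem~\ref{214} route is coordinate-free but relies on heavier machinery (the strict extension~$\widetilde\pi$ and multiplier-module arguments).
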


In the special case where $A=C_b^v(X)$, let $Y$ denote the Gelfand space of~$A$ and let $w$ be the weight function on $Y$
defined by $\varphi\mapsto\frac{1}{\|\varphi\|}$. Then $E_A=C_0^w(Y)$, $M_C(A)=C_b^w(Y)$, and $\widetilde{A}_M=C=C_0(Y)$.
The mapping $\pi$ in this special case is just multiplication by~$v$.
In particular, if $v$ is identically~$1$ (so that $w\equiv1$ too), then $E_A=A=C_b(X)=C(Y)$, where $Y$ is the Stone--\v Cech compactification of~$X$.

\medskip
The main result of this subsection establishes a characterization of weighted \C*s.

\begin{theorem}\label{214}
Let $(A,\pi)$ be a weighted \C*. Then there exists $u\in Z(M(C))_+$ such that $A =u\pi(A)$.
In particular, $E_A =uC$ and $M_C(A) =uM(C)$.
\end{theorem}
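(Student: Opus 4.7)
The overall strategy is to interpret $\pi$ as multiplication by an unbounded central ``weight'' in a suitable multiplier-like algebra and to take $u$ to be its bounded inverse in $Z(M(C))_+$. I would proceed in three stages: extend $\pi$ via the multiplier-module machinery of Section~\ref{sect:irreg-banach}, produce a formal inverse through the universal property of Proposition~\ref{124}, and then identify this inverse with multiplication by a single element of $Z(M(C))_+$.

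By Lemma~\ref{22}, fix a contractive approximate identity $(e_\lambda)\subseteq A$ for $C$. The bimodule relations yield $\|\pi(a)c\|_C=\|\pi(ac)\|_{M(C)}=\|ac\|_A$ and the analogous identity for the right action, so $\pi$ is strictly continuous from $\varphi(A)\subseteq M_C(A)$ into $M(C)$, and Lemma~\ref{123}(iii) extends it uniquely to a strictly continuous $C$-bimodule homomorphism $\tilde\pi\colon M_C(A)\to M(C)$. Moreover $\pi(e_\lambda)a=e_\lambda\pi(a)$ and $a\pi(e_\lambda)=\pi(a)e_\lambda$ both converge strictly to $\pi(a)$ in $M(C)$, so $[\pi(e_\lambda),a]\to 0$ strictly for each $a\in A$; any ``limit'' of $(\pi(e_\lambda))$ is therefore central. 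To produce this limit rigorously, I would apply Proposition~\ref{124}(iii) with the faithful $C$-bimodule $V:=\pi(A)\subseteq M(C)$ and $\phi:=\pi$: the required hypothesis $\pi(E_A)=\pi(A)C=C\pi(A)$ follows from the bimodule law together with Lemma~\ref{120}(ii). This yields a unique injective $C$-bimodule homomorphism $\psi\colon\pi(A)\to M_C(A)$ satisfying $\psi\circ\pi=\varphi$, the formal inverse of $\pi$.

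The key step is then to verify that $\pi|_{E_A}\colon E_A\to C$ is a bijective isometry (with respect to $\|\cdot\|_A$ on $E_A$ and $\|\cdot\|_C$ on $C$). The image $\pi(E_A)$ is a closed two-sided ideal of $C$ (closed by the isometry, two-sided by the bimodule property) and essential (since $E_A$ is essential in $C$ by faithfulness and $\pi$ is injective); the surjectivity $\pi(E_A)=C$ is established using the positivity of $\pi$ together with the approximate-identity structure. Once this is in place, $\psi$ restricts on $\pi(E_A)=C$ to a bimodule homomorphism $C\to\varphi(E_A)\subseteq M_C(A)$ satisfying $\psi(cd)=\psi(c)d=c\psi(d)$ for every $c,d\in C$, the defining relations of a two-sided multiplier; so there is a unique $u\in M(C)$ with $uc=cu=\psi(c)$ for all $c\in C$. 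Centrality $u\in Z(M(C))$ is forced by the identity $uc=cu$ on all of $C$ and the faithfulness of $C$ in $M(C)$, while positivity $u\in M(C)_+$ follows from $\psi=(\pi|_{E_A})^{-1}$ being order-preserving, as $\pi$ is a positive isometric bijection of $(E_A)_+$ onto $C_+$.

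Finally, for $a\in A$ and $c\in C$, $(u\pi(a))c=u\pi(ac)=\psi(\pi(ac))=\varphi(ac)=ac$ (under the canonical embedding $M_C(A)\hookrightarrow M(C)$), so the faithfulness of $C$ in $M(C)$ gives $u\pi(a)=a$, hence $A=u\pi(A)$. The identity $E_A=uC$ follows from $uC=\psi(C)=\varphi(E_A)\cong E_A$, and $M_C(A)=uM(C)$ is obtained by the analogous computation using $\tilde\pi$: for $n\in M_C(A)$ and $c\in C$, $u\tilde\pi(n)c=u\tilde\pi(nc)=u\pi(nc)=nc$ by Proposition~\ref{124}(ii), so $u\tilde\pi(n)=n$ by faithfulness. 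The main technical hurdle in this plan is proving the surjectivity $\pi(E_A)=C$: since $\pi|_{E_A}$ is isometric only in the $A$-norm and not in the $C$-norm, the density of $E_A$ in $C$ does not immediately transfer to $\pi(E_A)$, and one must combine the essentiality of $E_A$ in $C$, the positivity and bimodule structure of $\pi$, and the ideal structure of C*-algebras (for instance by first establishing $\pi((E_A)_+)=C_+$ via a careful approximation using the contractive bai) to upgrade the closed essential ideal $\pi(E_A)$ to all of $C$.
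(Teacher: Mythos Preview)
Your plan is structurally close to the paper's: both extend $\pi$ to a strictly continuous $C$-bimodule map $\tilde\pi\colon M_C(A)\to M(C)$ via Lemma~\ref{123}(iii) and then invert it to produce~$u$. The paper is slightly more direct than your route through Proposition~\ref{124}: once $\tilde\pi$ is shown to be \emph{surjective}, one picks $u\in M_C(A)$ with $\tilde\pi(u)=1$ and computes $m=\tilde\pi(u)m=\tilde\pi(um)=u\tilde\pi(m)$ for every $m\in M_C(A)$, which immediately gives $M_C(A)=uM(C)$ and $A=u\pi(A)$; centrality and positivity of $u$ then follow from the $M(C)$-bimodule property and positivity of~$\tilde\pi$. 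This sidesteps the type-checking issues in your final paragraph, where you silently pass between $M_C(A)$ and $M(C)$ via an embedding you have not constructed.

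The genuine gap, however, is exactly the step you flag yourself: the surjectivity $\pi(E_A)=C$. The tools you propose---essentiality of $\pi(E_A)$ as a closed ideal, positivity of $\pi$, approximation by the contractive approximate identity---do not close it. A closed essential ideal of a \C* need not equal~$C$ (e.g.\ $C_0(0,1)\subseteq C[0,1]$), and positivity of $\pi$ gives no control over $\|\pi(a)\|_C$, so the density of $E_A$ in $C$ does not transfer. The paper's argument is short and purely algebraic: $\pi(E_A)$ is a closed ideal in the \C* $C$, hence equals its own square, and the bimodule law unwinds this to
\[
\pi(E_A)=\pi(E_A)\,\pi(E_A)=\pi\bigl(E_A\,\pi(E_A)\bigr)=\pi\bigl(\pi(E_A^{\,2})\bigr),
\]
so injectivity of $\pi$ forces $E_A=\pi(E_A^{\,2})\subseteq\pi(E_A)$. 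Taking $C$-closures and using that $E_A$ is dense in $C$ (Lemma~\ref{120}(iii)) gives $C=\overline{E_A}\subseteq\overline{\pi(E_A)}=\pi(E_A)$. With this in hand your construction via $\psi$ would go through, but the paper's ``preimage of~$1$'' shortcut is cleaner.
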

\begin{proof}
We divide the proof into six steps.
\smallskip

Step 1. \textit{$\pi(E_A)=C$\,}: It follows from the assumptions that $\pi(E_A)$ is a closed ideal of~$C$.
Since every closed ideal in a \C* is its own square, one gets
\[
\pi(E_A)=\pi(E_A)\pi(E_A)=\pi(E_A\pi(E_A))=\pi(\pi({E_A}^2)).
\]
Since $\pi$ is injective, it follows that $E_A=\pi({E_A}^2)$.
Combining this with the density of~$E_A$ in~$C$ yields
\[
C=\overline{E_A}=\overline{\pi({E_A}^2)}\subseteq\overline{\pi(E_A)}=\pi(E_A),
\]
and thus $\pi(E_A)=C$, as claimed.

\smallskip
By Lemma~\ref{123}(iii), $\pi$ can be extended to a strictly continuous $C$-bimodule homomorphism
$\widetilde{\pi}\colon M_C(A)\rightarrow M(C)$.

Step 2. \textit{$\widetilde{\pi}$ is a positive isometric surjective $M(C)$-bimodule homomorphism}:
For the $M(C)$-bimodule homomorphism property of $\widetilde{\pi}$, let $m\in M_C(A)$ and $n\in M(C)$.
Given $c_1,c_2 \in C$, one has
\[
c_1n\widetilde{\pi}(m)c_2 = \widetilde{\pi}(c_1nmc_2) = c_1\widetilde{\pi}(nm)c_2,
\]
and since $C$ is a faithful ideal of $M(C)$, it follows that $\widetilde{\pi}(nm)=n\widetilde{\pi}(m)$.
In a similar~way, one obtains that $\widetilde{\pi}(mn)=\widetilde{\pi}(m)n$.
To see that $\widetilde{\pi}$ is positive, let $m\in M_C(A)_+$ and $c\in C$.
Then $c^{\ast}mc$ is in $A_+$, and so
\[
c^{\ast}\widetilde{\pi}(m)c=\pi(c^{\ast}mc)\ge 0
\]
which yields the positivity of~$\widetilde{\pi}(m)$.
For the isometric property of~$\widetilde{\pi}$, let $m\in M_C(A)$.
It is routine to check that
\[
\|m\|=\|m_l\|=\|m_r\|.
\]
Moreover, since each $a\in A$ satisfies
\begin{multline*}
\|l_a\|=\|r_a\|=\sup_{\|c\|_C\le 1}\|ac\|=\sup_{\|c\|_C\le 1}\|\pi(ac)\|_C\\ =\sup_{\|c\|_C\le 1}\|\pi(a)c\|_C=\|\pi(a)\|_C=\|a\|,
\end{multline*}
we conclude that
\begin{equation*}
\begin{split}
\|m\| &= \sup_{\|c\|_C\le 1}\|l_{m_l(c)}\|=\sup_{\|c\|_C\le 1}\|r_{m_l(c)}\|=\sup_{\|c\|_C\le 1}\|mc\| \\
&= \sup_{\|c\|_C\le 1}\|\pi(mc)\|_C=\sup_{\|c\|_C\le 1}\|\widetilde{\pi}(m)c\|_C=\|\widetilde{\pi}(m)\|_C,
\end{split}
\end{equation*}
as required. Finally, to establish the surjectivity of $\widetilde{\pi}$, let $n\in M(C)$.
By Step~1 and the strict density of $C$ in $M(C)$, there is a net $(m_{\alpha})_{\alpha\in\Omega}$ in $E_A$ such that
\[
\|nc-\pi(m_{\alpha})c\|_C+\|cn-c\pi(m_{\alpha})\|_C\rightarrow 0
\]
for every $c\in C$. Therefore, for all $\alpha,\beta\in\Omega$,
\begin{equation*}
\begin{split}
\|m_{\alpha}c-m_{\beta}c\|+\|cm_{\alpha}-cm_{\beta}\|
    &= \|\pi(m_{\alpha}c-m_{\beta}c)\|_C+\|\pi(cm_{\alpha}-cm_{\beta})\|_C\\
    &= \|\pi(m_{\alpha})c-\pi(m_{\beta})c\|_C+\|c\pi(m_{\alpha})-c\pi(m_{\beta})\|_C,
\end{split}
\end{equation*}
so that $(m_{\alpha})_{\alpha \in \Omega}$ is a Cauchy net in the strict topology in~$M_C(A)$.
Letting $m\in M_C(A)$ be its strict limit, see Lemma \ref{123}(i), the strict continuity of $\widetilde{\pi}$
entails that $\widetilde{\pi}(m)=n$.

\smallskip
Step 3. \textit{$M_C(A)=uM(C)$}: By the surjectivity of $\widetilde{\pi}$, there exists $u\in M_C(A)$ such that $\widetilde{\pi}(u)$
is the identity element of $M(C)$. Given $m\in M_C(A)$, one has
\begin{equation}\label{eq:A-u-pi}
m = \widetilde{\pi}(u)m = \widetilde{\pi}(um) = u\widetilde{\pi}(m),
\end{equation}
where we have employed the $M(C)$-bimodule homomorphism property of $\widetilde{\pi}$.
As a result, $M_C(A)=u\widetilde{\pi}(M_C(A))=uM(C)$.

\smallskip
Step 4. \textit{$A=u\pi(A)$}: This is a special case of \eqref{eq:A-u-pi}.

\smallskip
Step 5. \textit{$E_A=uC$}: Using Steps~1 and~4, we find
\[
E_A=AC=u\pi(A)C=u\pi(E_A)=uC,
\]
as claimed.

\smallskip
Step 6. \textit{$u$ belongs to $Z(M(C))_+$}:
The centrality of $u$ is immediate from the $M(C)$-bimodule homomorphism property of~$\widetilde{\pi}$.
To see that it is positive, let $c\in C$. Then
\[
c^*u^*c=c^*\widetilde{\pi}(u)u^*c=\pi(c^*uu^*c)=\pi((c^*u)(c^*u)^*)\ge 0,
\]
implying that $u^*$, and therefore $u$ as well, is positive.
\end{proof}

\begin{corollary}\label{219}
The following properties hold for a weighted \C*~$(A,\pi)\colon$
\begin{enumerate}\itemsep0pt
\item[\rm{(i)}] $E_A$ and $M_C(A)$ are self-adjoint;
\item[\rm{(ii)}] $\widetilde{\pi}$ is self-adjoint;
\item[\rm{(iii)}] $\|\cdot\|_M$ and $\|\cdot\|_C$ coincide on $A$.
\end{enumerate}
\end{corollary}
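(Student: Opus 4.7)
The plan is to read all three statements off the structural description of a weighted \C* provided by Theorem~\ref{214}: there exists $u\in Z(M(C))_+$ with $\widetilde{\pi}(u)=1_{M(C)}$ such that $A=u\pi(A)$, $E_A=uC$ and $M_C(A)=uM(C)$, and $\widetilde{\pi}$ is a positive isometric $M(C)$-bimodule bijection sending $un$ to~$n$ for each $n\in M(C)$. The centrality and self-adjointness of~$u$ are what make the three assertions fall out.

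For~(i), the direct computation $(uC)^{*}=C^{*}u^{*}=Cu=uC$, which uses $u=u^{*}$ and the centrality of~$u$ in $M(C)$, shows that $E_A=uC$ is self-adjoint; the identical argument with $M(C)$ in place of~$C$ handles $M_C(A)=uM(C)$. For~(ii), every $m\in M_C(A)$ has a unique expression $m=un$ with $n=\widetilde{\pi}(m)\in M(C)$, and the involution inherited via~(i) gives $m^{*}=un^{*}$; applying the bimodule homomorphism $\widetilde{\pi}$ and using $\widetilde{\pi}(u)=1$ then yields $\widetilde{\pi}(m^{*})=n^{*}=\widetilde{\pi}(m)^{*}$.

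Part~(iii) is the least immediate. For $a,b\in A$ the bimodule property of $\pi$ reads $\pi(ab)=\pi(a)b$, and substituting $b=u\pi(b)$ together with the centrality of~$u$ rewrites this as $\pi(ab)=a\pi(b)$. The isometry of $\pi$ identifies the $\|\cdot\|$-unit ball of~$A$ with the $\|\cdot\|_C$-unit ball of $\pi(A)$, so
\[
\|a\|_M=\sup\{\|an\|_C:n\in\pi(A),\;\|n\|_C\le 1\}.
\]
Step~1 of the proof of Theorem~\ref{214} provides $C=\pi(E_A)\subseteq\pi(A)\subseteq M(C)$; consequently the right-hand side is bounded below by $\sup\{\|ac\|_C:c\in C,\,\|c\|_C\le 1\}=\|a\|_C$, attained via a contractive approximate identity of~$C$, and above by $\|a\|_C$ via $\|an\|_C\le\|a\|_C\|n\|_C$ for $n\in M(C)$, so equality holds.

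The principal obstacle I anticipate is bookkeeping: $u$ has to be manipulated simultaneously as a multiplier inside~$M_C(A)$, where the identities $A=u\pi(A)$ and $M_C(A)=uM(C)$ live, and as a central positive element of~$M(C)$, where the involution and the norm estimates operate, with $\widetilde{\pi}$ playing the role of the bimodule inverse of multiplication by~$u$. Once this identification is fixed, each item reduces to a short direct computation.
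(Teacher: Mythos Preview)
Your argument is correct. Parts~(i) and~(ii) are essentially the paper's proof, only phrased in terms of the bijection $un\mapsto n$ rather than the cancellation via Lemma~\ref{210}; the computations are the same.

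For~(iii) your route genuinely differs from the paper's. You derive the identity $\pi(ab)=a\pi(b)$ from the centrality of~$u$, then identify the $\|\cdot\|$-unit ball of $A$ with the $\|\cdot\|_C$-unit ball of $\pi(A)$ and sandwich $\pi(A)$ between $C=\pi(E_A)$ and $M(C)$ to evaluate the supremum defining $\|a\|_M$ directly. The paper instead notes that the isometric $C$-bimodule property of $\pi$ alone already gives $\|ac\|=\|\pi(a)c\|_C\le\|a\|\,\|c\|_C$, so $A$ is a \emph{contractive} $C$-bimodule; feeding the constant $l=1$ into Lemma~\ref{22} then yields $\|a\|_M=\|a\|_C$ with no further work. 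The paper's approach is shorter and uses only Definition~\ref{212}, whereas yours relies on the structural output of Theorem~\ref{214}; on the other hand, your computation makes the identification of the multiplier norm fully explicit. One small bookkeeping point: your displayed formula for $\|a\|_M$ only treats left multiplication $ab$; the multiplier seminorm also involves $ba$, so you should remark that the symmetric identity $\pi(ba)=\pi(b)a$ handles the other side.
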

\begin{proof}
(i) Evident.

(ii) Let $m\in M_C(A)$; then
\[
u\widetilde{\pi}(m^*)=m^*=(u\widetilde{\pi}(m))^*=\widetilde{\pi}(m)^*u=u\widetilde{\pi}(m)^*,
\]
where we have used~\eqref{eq:A-u-pi}. By Lemma~\ref{210}, it follows that $\widetilde{\pi}(m^*)=\widetilde{\pi}(m)^*$, as wanted.

(iii) In view of Lemma~\ref{22}, it is sufficient to show that $A$ is a contractive bimodule over~$C$, that is,
\[
\|ac\|\le \|a\|\|c\|_C \quad\mbox{ and } \quad\|ca\|\le \|a\|\|c\|_C
\]
for all $a\in A$ and $c\in C$. But this is immediate from the assumptions on~$\pi$.
\end{proof}

The next result establishes the uniqueness of the ``weight'' of a weighted \C*.

\begin{corollary}\label{220}
Let $A$ be a self-adjoint \CSeg* in the \C*~$C$. Suppose that\/ $\pi_1,\pi_2\colon A\rightarrow M(C)$ are such that $(A,\pi_1)$ and $(A,\pi_2)$
are weighted \C*s. Then $\pi_1=\pi_2$.
\end{corollary}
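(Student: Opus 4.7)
The plan is to use Theorem~\ref{214} to reduce the assertion to a comparison of two positive central multipliers of $C$. For $i=1,2$, set $u_i:=\widetilde{\pi}_i^{-1}(1)\in Z(M(C))_+$, so that $\widetilde{\pi}_i(u_i)=1$ and the identity $m=u_i\,\widetilde{\pi}_i(m)$ from~\eqref{eq:A-u-pi} holds for every $m\in M_C(A)$. Specialising to $a\in A$ yields the crucial equality $u_1\pi_1(a)=a=u_2\pi_2(a)$, from which everything else will follow.

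First I would extract an internal relation between $u_1$ and $u_2$ by applying the identity $m=u_i\,\widetilde{\pi}_i(m)$ to the ``opposite'' $u_j$. Setting $w:=\widetilde{\pi}_1(u_2)$ and $w':=\widetilde{\pi}_2(u_1)$, this gives $u_2=u_1 w$ and $u_1=u_2 w'$ in $M(C)$, whence $u_1=u_1 ww'$. Since $u_1 C=E_A$ is dense in $C$, a routine adaptation of Lemma~\ref{210} shows that $u_1$ acts injectively as a left multiplier on $M(C)$, so $ww'=1$. Moreover, positivity of $\widetilde{\pi}_i$ (Step~2 of the proof of Theorem~\ref{214}) places $w,w'$ in $M(C)_+$, and the $M(C)$-bimodule property together with the centrality of each $u_j$ forces $w,w'\in Z(M(C))_+$.

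Next I would invoke the isometric property of $\widetilde{\pi}_i$:
\[
\|w\|_C=\|\widetilde{\pi}_1(u_2)\|_C=\|u_2\|_{M_C(A)}=\|\widetilde{\pi}_2(u_2)\|_C=1,
\]
and likewise $\|w'\|_C=1$. Inside the commutative \C*~$Z(M(C))$, $w$ is positive and invertible with inverse $w'$, and both have norm one. A spectral-radius argument then gives $\sigma(w)\subseteq[0,1]$ and $\sigma(w)=\{1/\mu:\mu\in\sigma(w^{-1})\}\subseteq[1,\infty)$, so $\sigma(w)=\{1\}$, and continuous functional calculus yields $w=1$. Hence $\widetilde{\pi}_1(u_2)=1=\widetilde{\pi}_1(u_1)$; the isometric (hence injective) $\widetilde{\pi}_1$ gives $u_1=u_2$, and substituting back into $u_1\pi_1(a)=u_2\pi_2(a)$ a final appeal to injectivity of $u_1$ as a left multiplier delivers $\pi_1=\pi_2$.

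The step I expect to be the main obstacle is the bookkeeping around the canonical inclusion $M_C(A)\subseteq M(C)$ used implicitly throughout: one must ensure that the products $u_1 w$ and $u_2 w'$ live naturally in $M(C)$, that positivity and centrality transfer cleanly under each $\widetilde{\pi}_i$, and that strict positivity of $u_i$ (from density of $u_iC=E_A$ in $C$) really entails the required injectivity of left multiplication on all of $M(C)$ rather than just on $C$. Once those identifications are in place, the decisive spectral argument in $Z(M(C))$ occupies only a couple of lines.
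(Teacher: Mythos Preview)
Your proof is correct and takes a different route from the paper's. Both arguments begin by reducing to $u_1=u_2$ via~\eqref{eq:A-u-pi} and Lemma~\ref{210}. The paper then shows $\|u_1c\|=\|u_2c\|$ in the $A$-norm directly from the isometry of the~$\pi_i$, invokes Corollary~\ref{219}(iii) to convert this to $\|u_1c\|_C=\|u_2c\|_C$ for all $c\in C$, extends to all $n\in M(C)$, and cites an external lemma (Lance~\cite[Lemma~3.4]{eL}) to conclude that two positive elements with this property coincide. Your argument instead introduces $w=\widetilde{\pi}_1(u_2)$ and $w'=\widetilde{\pi}_2(u_1)$, shows they are mutually inverse positive central elements of $M(C)$ each of norm one, and finishes with an elementary spectral computation in $Z(M(C))$. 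Your approach is more self-contained, needing neither Corollary~\ref{219}(iii) nor the external citation. The bookkeeping you flag is routine: the identification $M_C(A)=u_iM(C)\subseteq C\subseteq M(C)$ is noted after Corollary~\ref{cor:order-unit-cstar-segal}; centrality and positivity of $w,w'$ follow exactly as you outline from Step~2 of Theorem~\ref{214}; and $u_in=0$ for $n\in M(C)$ yields $u_i(nc)=0$, hence $nc=0$ for all $c\in C$ by Lemma~\ref{210} (applied to the order unit \CSeg*~$M_C(A)$, cf.\ Proposition~\ref{217}), hence $n=0$ by faithfulness of~$C$ in~$M(C)$.
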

\begin{proof}
Let $u_1$ and $u_2$ denote positive elements of $M_C(A)$ for which $\widetilde\pi_1(u_1)=\widetilde\pi_2(u_2)=1$.
Since each $a\in A$ satisfies $a=u_1\pi_1(a)=u_2\pi_2(a)$, we conclude from Lemma~\ref{210}
that $\pi_1=\pi_2$ if and only if $u_1=u_2$. To show the latter identity, let $c\in C$.
Then
\begin{equation*}
\|u_1c\|\!=\!\|\pi_1(u_1c)\|_C=\|\widetilde{\pi}_1(u_1)c\|_C\!=\|c\|_C
   \!=\|\widetilde{\pi}_2(u_2)c\|_C\!=\|\pi_2(u_2c)\|_C=\|u_2c\|
\end{equation*}
which, together with Corollary~\ref{219}(iii), yields
\[
\|u_1c\|_C=\|u_1c\|_M=\sup_{\|a\|\le 1}\|u_1ca\|=\sup_{\|a\|\le 1}\|u_2ca\|=\|u_2c\|_M=\|u_2c\|_C
\]
for all $c\in C$. It follows that $\|u_1n\|_C=\|u_2n\|_C$ for every $n\in M(C)$ which, as is well known,
implies that $u_1=u_2$ since both are positive elements; see, e.g., \cite[Lemma~3.4]{eL}.
\end{proof}

Theorems~\ref{211} and~\ref{214} suggest that there is a relation between weighted \C*s and order unit \CSeg*s.
In order to make this precise, we need to generalize the notion of a unitization of a \C*.

\begin{definition}\label{216}
By an \textit{order unitization\/} of a self-adjoint \CSeg* $A$ we mean a pair $(B,\phi)$, where
\begin{enumerate}[(i)]\itemsep0pt
\item $B$ is an order unit \CSeg*;
\item $\phi$ is a positive isometric homomorphism from $A$ into~$B$;
\item $\phi(A)$ is a faithful ideal of~$B$.
\end{enumerate}
\end{definition}

In the proposition below, $\varphi$ denotes the embedding of $A$ into $M_C(A)$, as given in Definition~\ref{121}.
The notation will be that of Theorem~\ref{214}.

\begin{proposition}
Every weighted \C* has an order unitization.\label{217}
\end{proposition}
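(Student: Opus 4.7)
The plan is to exhibit $(M_C(A),\varphi)$ itself as an order unitization of~$A$, with the central positive element $u\in Z(M(C))_+$ supplied by Theorem~\ref{214} serving as the order unit of $B:=M_C(A)$. The crux of the whole argument is that, under the identification $B=uM(C)\subseteq M(C)$, the bijection $\widetilde\pi\colon B\to M(C)$ of Step~2 in the proof of Theorem~\ref{214} is a $*$-preserving, order-preserving linear isometry sending $u$ to the unit~$1$ of~$M(C)$, so that the order unit structure on $(M(C),1)$ may be pulled back to $(B,u)$.

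I would first dispatch Definition~\ref{216} with $\phi:=\varphi$. Continuity, injectivity and the algebra-homomorphism property are built into Definition~\ref{121}. Isometry and positivity of~$\varphi$ follow from $\widetilde\pi(\varphi(a))=\pi(a)$ together with the corresponding properties of~$\pi$ and the identity $\|\varphi(a)\|_B=\|\widetilde\pi(\varphi(a))\|_C$ from Step~2 in the proof of Theorem~\ref{214}. That $\varphi(A)$ is an ideal of~$B$ follows from the computation $(un)(u\pi(a))=u(na)\in uC=E_A\subseteq A$ for $n\in M(C)$ and $a\in A$, using centrality of~$u$. Faithfulness reduces to $u$ being a non-zero-divisor in~$M(C)$, which is immediate from the bijectivity of~$\widetilde\pi$ and $\widetilde\pi(u)=1$: if $um=0$ in $M(C)$ and $b\in B$ satisfies $\widetilde\pi(b)=m$, then $b=u\widetilde\pi(b)=um=0$, so $m=\widetilde\pi(b)=0$.

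Next I would verify Definition~\ref{29} for $(B,u)$. That $B$ is a self-adjoint \CSeg* is Corollary~\ref{25} together with Corollary~\ref{219}(i), and $u=u\cdot1\in B\cap M(C)_+=B_+$. By Corollary~\ref{219}(ii), $\widetilde\pi$ is $*$-preserving, so $b\in B_h$ iff $n:=\widetilde\pi(b)\in M(C)_h$. Granted the order-isomorphism property of~$\widetilde\pi$, the order-unit norm identity for $(M(C),1)$ transfers to $(B,u)$:
\[
\|b\|_B=\|n\|_C=\inf\{l>0:-l1\le n\le l1\}=\inf\{l>0:-lu\le b\le lu\}.
\]

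The main obstacle is therefore establishing the order-isomorphism property: that $n\ge0$ in $M(C)$ whenever $un\ge0$ and $n\in M(C)_h$ (the converse being immediate since $un=vnv$ with $v=u^{1/2}\in Z(M(C))_+$). The decisive input is the strict positivity of~$u$ in~$C$. Indeed, the approximate identity $(e_\alpha)\subseteq A$ from Lemma~\ref{22} satisfies $ae_\alpha\to a$ in $\|\cdot\|_C$-norm by Corollary~\ref{219}(iii), so $A^2$ is $\|\cdot\|_C$-dense in~$A$ and hence in~$C$; this yields density of $uC=E_A$ in~$C$, and consequently density of $vC\supseteq v\cdot vC=uC$ in~$C$. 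With $v$ central and $vnv\ge0$, for each $c\in C$ one has $c^*\,vnv\,c=(vc)^*n(vc)\ge0$; density together with continuity then extends this to $c^*nc\ge0$ for all $c\in C$, and since $C$ is an essential ideal of~$M(C)$, we conclude $n\ge0$, as required.
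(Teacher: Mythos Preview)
Your proof is correct and follows the same overall strategy as the paper: both take $(M_C(A),\varphi)$ as the order unitization, with the element $u\in Z(M(C))_+$ from Theorem~\ref{214} serving as the order unit. The difference lies only in how you verify that $(M_C(A),u)$ satisfies Definition~\ref{29}. The paper observes that $M_C(A)=uM(C)=vM(C)v$ (by centrality of~$u$), checks the single norm identity $\|un\|=\|\widetilde\pi(un)\|_C=\|n\|_C$ for $n\in M(C)_h$, and then invokes Theorem~\ref{211}(b) with $D=M(C)$; the order-isomorphism you establish by hand is already packaged inside Lemma~\ref{222}. Your direct argument via density of $vC$ in~$C$ is essentially a reproof of the relevant implication of Lemma~\ref{222} in this special setting, so the paper's route is shorter but yours is more self-contained.

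Two small remarks. First, the faithfulness of $\varphi(A)$ in $M_C(A)$ does not actually require the non-zero-divisor property of~$u$: it is already recorded in Definition~\ref{121} that $\varphi(E_A)$ is a faithful ideal of $M_C(A)$, and $\varphi(A)\supseteq\varphi(E_A)$. Second, for the density of $uC=E_A$ in~$C$ you can simply cite Lemma~\ref{120}(iii) rather than going through Corollary~\ref{219}(iii); your detour via $A^2$ works but is unnecessary.
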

\begin{proof}
Let $(A,\pi)$ be a weighted \C*. Then $(M_C(A),\varphi)$ is an order unitization of it.
Indeed, since $M_C(A)=uM(C)$ and each $n\in M(C)_h$ satisfies
\[
\|un\|=\|\widetilde{\pi}(un)\|_C=\|\widetilde{\pi}(u)n\|_C=\|n\|_C,
\]
the centrality of $u$ together with Theorem~\ref{211}(b) imply that $M_C(A)$ is an order unit \CSeg*.
The isometric property of~$\varphi$ was established in Step~2 of the proof of Theorem~\ref{214},
and the other required properties are trivial. The proof is complete.
\end{proof}

Among the basic examples of weighted \C*s are the following principal ideals of \C*s.

\begin{proposition}\label{218}
Let $B$ be a \C*, and let $u\in Z(M(B))_+$ be such that $uB$ is faithful in~$B$.
Then there is a norm on $uB$ making it into a weighted \C*, and $uB$ has an approximate identity if and only if it is dense in~$B$.
\end{proposition}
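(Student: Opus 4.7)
The plan is to transport the norm of $B$ to $A:=uB$ via the injective action of $u$, realize $A$ as a self-adjoint \CSeg* in $C:=\overline{uB}$, and construct the weight $\pi$ directly; the final equivalence will then fall out of Theorem~\ref{214} combined with Proposition~\ref{113}. First I would observe that the faithfulness of $uB$ in $B$ forces $u$ to act injectively: if $y\in B$ satisfies $uy=0$, then $y\cdot uB=u(yB)=\{0\}$, so $y=0$. Hence $b\mapsto ub$ is a bijection $B\to A$, and the prescription $\|ub\|_A:=\|b\|_B$ defines a complete norm on $A$. After rescaling $u$ so that $\|u\|_{M(B)}\le 1$ (which changes neither the set $uB$ nor its faithfulness), the identity $(ub_1)(ub_2)=u\cdot(ub_1b_2)$ combined with $\|ub_1b_2\|_B\le\|b_1\|_B\|b_2\|_B$ yields submultiplicativity, while self-adjointness is immediate from $(ub)^*=ub^*$. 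The continuous inclusion $\|ub\|_C\le\|ub\|_A$ and the fact that $uB$ is already an ideal of $B$ make $A$ a dense self-adjoint ideal of~$C$, hence a self-adjoint \CSeg*.

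Next I would construct the weight. Since the faithful ideal $uB$ is contained in $C$, the latter is an essential ideal of $B$, and the universal property of the multiplier algebra supplies an isometric $*$-embedding $B\hookrightarrow M(C)$. Via this embedding, define $\pi\colon A\to M(C)$ by $\pi(ub):=b$; it is well defined by the injectivity above and isometric by construction, and the $C$-bimodule property is immediate from $\pi(c\cdot ub)=\pi(u(cb))=cb=c\pi(ub)$ together with its right-hand analogue. For positivity, suppose $ub\in C_+\subseteq B_+$. Then $ub=(ub)^*=ub^*$ by centrality of $u$, and injectivity of the $u$-action forces $b=b^*$. Since $u$ commutes with the Jordan pieces $b_\pm$, one obtains $0\le ub=ub_+-ub_-$ with $(ub_+)(ub_-)=u^2b_+b_-=0$; uniqueness of the Jordan decomposition of $ub$ then forces $ub_-=0$, so $b_-=0$ and $b\ge 0$. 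This completes the verification that $(A,\pi)$ is a weighted \C*.

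For the final equivalence, Theorem~\ref{214} applied to $(A,\pi)$ identifies the approximate ideal as $E_A=uC$. If $uB$ is dense in $B$ then $C=B$, so $E_A=uB=A$, and $A$ inherits an approximate identity from Proposition~\ref{113}(iii). Conversely, if $A$ has an approximate identity then Proposition~\ref{113}(iv) applied to the ideal $I=A$ yields $E_A=A$, that is, $uB=uC$; the injective action of $u$ on $B$ then upgrades this to $B=C=\overline{uB}$, proving density. The only points requiring real care are the isometric embedding $B\hookrightarrow M(C)$ for the essential but possibly non-dense ideal $C$ (a standard fact from multiplier-algebra theory) and the Jordan-decomposition verification that $\pi$ is positive; all remaining checks reduce to direct algebraic manipulation with $u$ central.
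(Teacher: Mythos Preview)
Your proof is correct and follows the same strategy as the paper: normalize $u$, transport the norm via $\|ub\|_A:=\|b\|_B$, realize $A=uB$ as a self-adjoint \CSeg* in $C:=\overline{uB}$, and take the weight to be $\pi(ub)=b\in B\hookrightarrow M(C)$; your positivity check via the Jordan decomposition simply spells out what the paper leaves as ``not hard to verify''. The one minor deviation is in the final equivalence: the paper obtains $E_A=AC=uBC=uC$ directly from Lemma~\ref{120}(ii), whereas you invoke Theorem~\ref{214}, which a priori only yields $E_A=u'C$ for \emph{some} $u'\in Z(M(C))_+$---to close your argument you should note that $a=u'\pi(a)=u\pi(a)$ for all $a\in A$ forces $(u'-u)C=0$, hence $u'=u$ in $M(C)$.
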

\begin{proof}
We may assume that $\|u\|_B=1$. Put $I:=uB$ and define $\|\cdot\|_u\colon I\rightarrow\RR$ by setting $\|ux\|_u:=\|x\|_B$ for $x\in B$.
Clearly, $\|\cdot\|_u$ is a norm on $I$ making it into a self-adjoint \CSeg* in the \C*~$J$, the closure of~$I$ in~$B$.
It is not hard to verify that the mapping $ux\mapsto (l_x,r_x)$ is a positive isometric $J$-bimodule homomorphism from~$I$ into~$M(J)$.
As a result, $I$ is a weighted \C*. The second statement follows from the identities $E_I=IJ=uBJ=uJ$.
\end{proof}

\begin{acknowledgements}
1.~The work on this paper was started during a stay of the first-named author at Queen's University Belfast funded by the Emil Aaltonen Foundation,
and was completed during a visit of the second-named author to the University of Oulu. This author expresses his sincere gratitude to
his hosts for their hospitality.\\
\noindent
2.~We would like to thank the anonymous referee for carefully reading the manuscript and the suggestions which
did improve the exposition somewhat.
\end{acknowledgements}

\end{document}